\renewenvironment{proof}{\noindent\textbf{Proof:\ }}{\hspace*{\fill}$\square$\medskip}
\newtheorem{theorem}{Theorem}
\newtheorem{definition}{Definition}[section]
\newtheorem{lemma}[definition]{Lemma}
\newtheorem{proposition}[definition]{Proposition}
\newtheorem{remark}[definition]{Remark}
\newcommand{\cG}{\mathcal{G}}
\newcommand{\cQ}{\mathcal{Q}}
\newcommand{\cR}{\mathcal{R}}
\newcommand{\cS}{\mathcal{S}}
\newcommand{\cT}{\mathcal{T}}
\newcommand{\cU}{\mathcal{U}}
\newcommand{\cMT}{\mathcal{MT}}
\newcommand{\N}{\mathbb{N}}
\newcommand{\R}{\mathbb{R}}
\newcommand{\bS}{\mathbb{S}}
\newcommand{\bT}{\mathbb{T}}
\newcommand{\Z}{\mathbb{Z}}
\newcommand{\nin}{\notin}
\newcommand{\eps}{\varepsilon}
\newcommand{\norm}[1]{\left\|#1\right\|}
\newcommand{\pd}{\partial}
\newcommand{\dd}{\,\mathrm{d}}
\newcommand{\1}{\mathbbm{1}}
\DeclareMathOperator*{\esssup}{ess\,sup}
\numberwithin{equation}{section}
\begin{document}
\title{Long Range Particle Dynamics and the Linear Boltzmann Equation }
 \author{Matthew Egginton\footnote{Mathematics Institute, University of Warwick. m.egginton@warwick.ac.uk. 
Funding provided by MasDoc DTC grant number EP/H023364/1 provided by EPSRC.}  and Florian Theil\footnote{Mathematics Institute, University of Warwick. f.theil@warwick.ac.uk}}
 \maketitle

 \begin{abstract}
 This paper gives the first full proof of the justification of the linear Boltzmann equation from an underlying long range particle evolution. We suppose that a tagged particle is interacting with a background via a two body potential that is decaying faster than $ C\exp\left(-C|x|^{\frac{3}{2}}\right) $, and that the background is initially distributed according to a function in $L^1((1+|v|^2)\dd v)$ in velocity and uniformly in space. Under finite mass and energy assumptions on the initial density, the tagged particle density converges weak-$\star$ in $L^\infty$ to a solution of the linear Boltzmann equation.
 
 The proof uses estimates on two body scattering and on the relationship between long range dynamics and dynamics with a truncated interaction potential to explicitly estimate the error between densities for long and short range dynamics.
 To compare the difference between the short range dynamics and the linear Boltzmann equation, we use a tree based structure to encode the collisional history of the tagged particle.
 \end{abstract}
 
 \section{Introduction}\label{sec:intro}
 
 One of the main purposes of kinetic theory is to provide a justification of the macroscopic laws of motion for gaseous fluids, where the main effect on the density of the fluid is given by the statistical impact of collisions. A famous open problem in this area is the justification of the non-linear Boltzmann equation from an underlying particle dynamics model. The first proofs of this for short times were given by Lanford and King \cite{lanford1975, king1975} and use the BBGKY hierarchy. These proofs are valid under the assumption that the interactions between the particles are short range in nature. This proof has been completed in \cite{gallagher2013} and extended in \cite{pulvirenti2014}. Furthermore, \cite{bodineau2015} uses the BBGKY hierarchy to derive the linear Boltzmann equation as a perturbation from equilibrium of the fully non-linear Boltzmann equation for hard sphere interactions. Other methods have been proposed for the gainless Boltzmann equation, as in \cite{matthies2012}.
 
 However, most physically justifiable interaction potentials are long range in nature, and even in the origins of the Boltzmann equation \cite{maxwell1866, boltzmann1970} the interactions were assumed to be long range. With the exception of \cite{Ayi2017} where a near equilibrium assumption is made, there are no  results justifying the Boltzmann equation with a long range interaction in the mathematical literature. There are several results relating to the existence of solutions for the Boltzmann equation with long range potential, for example \cite{alexandre2002, gressman2010}, as well as results for other kinetic theory models, for example in the study of the Vlasov-Poisson system, where there are existence theory results as for example \cite{pfaffelmoser1992, crippa2017} for a Coulomb force field.
 
A study of justification in this setting poses two problems; firstly the non-linearity of the Boltzmann equation and secondly the long range nature of the interaction. The first can be removed by considering a simpler particle interaction which enables the study of linear equations. An example of a simpler particle system is given by the Lorentz gas \cite{lorentz1905} or the Rayleigh gas, where one assumes the only interactions present are between a unique particle of one species and a collection of particles of another species that does not self interact, and we use this system. In this setting one only needs to consider the density of the unique particle species, and the macroscopic evolution is given by the linear Boltzmann equation.

The Lorentz gas is well studied in this context, and it has been shown that the linear Boltzmann equation is the low density limit for such a particle system, where the background scatterers are fixed and randomly placed and the interactions are hard spheres \cite{Gallavotti72, vanBeijeren1980, lebowitz1982, boldrighini1983} or short range potential interactions \cite{Spohn1978}. However the linear Boltzmann equation can fail as the macroscopic limit if there are non-random periodic scatterers, see for example \cite{Golse2008,marklof2010}. With regards to a Rayleigh gas particle system, where the background can move, the linear Boltzmann equation has been proven to be the macroscopic limit with hard sphere interactions in \cite{Matthies2018, matthies2017} for arbitrarily long times.

The second problem introduced above was the inclusion of long range interactions. The first difficulty with the use of long range forces is that the typical studies of the linear Boltzmann equation \cite{dautray1993, allaire2015} are only valid in the case of compact interactions. Furthermore, from a particle dynamics perspective, one no longer has a well defined notion of a collision, since the particles interact for all time. This adds additional difficulty when the primary operation in the evolution equations in kinetic theory are two body collisions. By  removing the long range part of the interaction with a regularising parameter, in the setting of the Lorentz gas, the paper \cite{desvillettes1999} demonstrated that solutions of a cut off particle evolution converge to an uncut off linear Boltzmann equation. Our analysis goes further than this, to say that if the potential decays sufficiently fast, then convergence holds on the level of the long range processes. 
 
 The purpose of this paper is to give the first full proof of the justification that the linear Boltzmann equation is the low density limit for a long range Rayleigh gas particle system. We start by introducing the particle model, before stating the main theorem.
 
Suppose that one distributes $N\in \N$ particles independently and identically according to a density which is uniform in the 3 dimensional torus $\bT^3$ and according to a density  $g$ with finite mass and energy on the velocity space $ \R^3$ which we call the background particles, and distribute one particle independently from this background according to a probability density $f_0$ on the phase space $\cU= \bT^3 \times \R^3$, which we call the tagged particle. On the position space $\bT^3$ we impose periodic boundary conditions. We restrict ourselves throughout to three dimensions to ease notation, but in principle the method works for any dimension greater than or equal to two.
 
 Suppose that, for microscopic spatial scale $\eps>0$, we allow the positions of these particles to evolve via the equations,  
 \begin{equation}\label{eq:partdens}
  \begin{aligned}
   \dot x(t) &=v(t) , \hspace{1cm} \dot v(t) = -\frac{1}{\eps}\sum_{j=1}^{N}\nabla \phi\left(\frac{x(t)-x_j(t)}{\eps}\right)\\
   \dot x_i(t)&=v_i(t), \hspace{1cm} \dot v_i(t) =0\\
 \end{aligned}
 \end{equation}
where the background is indexed from $i=1,\dots,N$, and the function $\phi\colon \R^3 \to \R$ is the interaction potential. Since the evolution is on $\bT^3$, by $x-x_i$ we mean the vector $x-(x_i+k)$ for $k\in \Z^3$ for which the Euclidean distance is the minimum.

We denote by $f^\eps\colon [0,T]\times \cU\to \R$ the phase space density for the tagged particle. We remark here that the majority of the interactions between the tagged particle and the background are grazing, where the distance between the particles is large.   One should then expect these interactions individually to deviate the velocity of the tagged particle by a small amount, and also the contribution from all grazing collisions should affect the distribution $f^\eps$ in a small but quantifiable manner. This is made  precise in Section~\ref{sec:grazcol}.

The macroscopic evolution of the tagged particle density on phase space $\cU$ and time interval $[0,T]$ is given by a weak solution of the linear Boltzmann equation 
\begin{equation*}
 \pd_t f+v\cdot \nabla _x f = L(f) ,
\end{equation*}
where the linear collision operator is given by
\begin{equation}\label{eq:strongcollop}
 L(f):= \int_{\R^3} \int_\cS \left(f'\,g'_\star-f\, g_\star\right) |v_\star-v|\dd S \dd v_\star,
\end{equation}
where we make precise the meaning of weak solution in the statement of Theorem~\ref{thm:main}. The shorthand $g_\star=g(v_\star)$ is used for evaluation of density at the velocity of the colliding particle and $f'=f(v')$, $g'_\star =g(v'_\star)$  are used to represent evaluation of the densities at the pre-collisional velocities in a two body interaction. Furthermore, 
\begin{equation}\label{eq:planeS}
 \cS= \{w\in \R^3: w\cdot (v_\star-v)=x\cdot (v_\star-v)\}
\end{equation}
is the parameter space for possible interactions. The pre-collisional velocities are determined by the potential $\phi$ from the underlying two body particle dynamics. There are various ways of writing \eqref{eq:strongcollop}, and our notation originates in \cite{truesdell1980}.

The use of weak solutions is a natural consequence of the long range interaction, where the grazing collisions ensure that one cannot make sense of the strong form given above due to the singularity encompassed in integration over the tail of $\phi$ in the plane $\cS$. The integral however does converge in a weak sense for arbitrary $f\in L^1$.  We comment in more detail in Section~\ref{sec:slns}. Furthermore, this equation is the Fokker-Planck equation for a Markov process, and this structure will be exploited in later sections when comparing the particle and Boltzmann evolutions.

The analysis of later sections allows for the comparison of the two scales only for potentials which satisfy the following conditions, as well as a decay assumption on the potential, which is stated in equation \eqref{eq:potdecay} in Theorem~\ref{thm:main}.

 \begin{definition}\label{def:admisspot}
 A potential $\phi\colon \R^3\to \R$ is an {\bf admissible long range potential} if
   \begin{enumerate} 
\item $\phi$ is radial, namely there is a function $\psi\in C^2 (0,\infty)$ such that $\phi(x)=\psi(|x|)$,
\item $\psi$ is strictly decreasing,
\item $\lim\limits_{\rho\to \infty}\psi(\rho)=0$ and $\lim\limits_{\rho\to 0}\psi(\rho)=\infty$,
\item There is a $\rho_1>0$ such that for $\rho\in (0,\rho_1)$, we have $\frac{\dd}{\dd \rho}\psi(\rho)+ \psi(\rho)\leq 0$.
\end{enumerate}
\end{definition}

This definition should be compared with the definition of the potential in \cite{gallagher2013},  as this book is the inspiration for our conditions on the interaction potential. Assumption (1) delivers sufficient regularity to make sense of equations \eqref{eq:partdens}, and so that later estimates are well defined. Assumptions (2) and (3), together with the radial assumption in (1), should be thought of as conditions to ensure that the interaction is well defined, and in particular they ensure that the interaction is repulsive. Furthermore, the second limit in (3) ensures that singularities from coalescence are not present in the dynamics.

The motivation for using condition (4) is to ensure that one can estimate the scattering time for near collisions and provides control on the estimate in terms of the radius of the potential. To see that such an unbounded potential can indeed satisfy conditions (3) and (4) together, consider $\psi(\rho)=\rho^{-s}$ for $s>0$. Then $\frac{\dd}{\dd \rho}\psi(\rho)= -s\rho^{-s-1}=-s\rho^{-1}\rho^{-s}$ and so for $\rho<s$ we have the relationship described in (4).

The system is completed by specifying the assumptions on the initial density of the tagged particle, which are included in our main theorem.
 \begin{theorem}\label{thm:main}
  Let $T>0$, and suppose that $f_0\geq 0$ with $f_0\in L^1\cap L^\infty(\cU, (1+|v|^2)\dd x \dd v)$, and suppose that  $g\geq 0$ with
  \begin{equation}\label{eq:bckgrnd}
   \begin{aligned}
      \int_{\R^3}(1+|v|^2)\,g(v) \dd v &<\infty,\\ 
  \esssup_{v\in \R^3} (1+|v|^5)g(v)&<\infty.
   \end{aligned}
  \end{equation} 
   Suppose that $f^\eps$ is the phase space density of the tagged particle evolving via \eqref{eq:partdens} on the space $[0,T]\times \cU$ with initial density given by $f_0$, and background distributed according to $g$  with $\phi$ an admissible potential as in Definition~\ref{def:admisspot}, such that there are a $\rho_2>0$, constant $C>0$ and $\gamma>0$ such that the radial force satisfies
 \begin{equation}\label{eq:potdecay}
  -\frac{\dd}{\dd \rho}\psi(\rho)\leq Ce^{-C\rho^{\frac{3}{2}+\gamma}}
 \end{equation}
for all $\rho>\rho_2$. Then as $\eps\to0$ with $N\eps^{2}=1$ we have $f^\eps$ converges weak-$\star$ in $L^\infty$, up to a subsequence, to a weak solution $f$ of the linear Boltzmann equation associated to $\phi$ on $[0,T]\times \cU$, meaning that
 \begin{equation*}
  \int_\cU (1+|v|^2)\, f(t,x,v) \dd x \dd v <\infty
 \end{equation*}
for all $t\in [0,T]$ and that, for $h\in C^\infty_0([0,T)\times \cU)$, we have
   \begin{equation}\label{eq:weaksol}
    -\int_0^T\int_\cU  \left(\pd_t h +v\cdot \nabla_x h\right)\,f\dd x \dd v \dd t-\int_{\cU} f_0 \,h(0) \dd x \dd v= \int_0^T\langle L(f) ,\,h\rangle \dd t
   \end{equation}
   where the action of the collision operator on a test function $h$ is defined by
   \begin{equation*}
    \langle L(f) ,\,h\rangle=\int_\cU \int_{\R^3}\int_\cS (h'-h) \,f\, g_\star \,|v_\star-v|\dd S\dd v_\star \dd x \dd v,
   \end{equation*} 
   where $\cS$ is defined in \eqref{eq:planeS}   and $v'$ is specified in equation~\eqref{eq:scatop}.
 \end{theorem}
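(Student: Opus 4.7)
The plan is to prove Theorem~\ref{thm:main} via a three-step comparison that bridges the long-range particle system and the linear Boltzmann equation through a mesoscopic truncation. I introduce a cutoff radius $R=R(\eps)\to\infty$ at a rate to be chosen, let $\phi_R$ denote the potential $\phi$ restricted to the ball of rescaled radius $R$ (smoothed near the boundary), and define $f^{\eps,R}$ as the tagged density under the dynamics \eqref{eq:partdens} with $\phi$ replaced by $\phi_R$ and $f^R$ as the solution of the linear Boltzmann equation with the cross-section derived from $\phi_R$. The triangle inequality reduces the convergence $f^\eps\to f$ to estimating the three differences $f^\eps-f^{\eps,R}$, $f^{\eps,R}-f^R$ and $f^R-f$, which I would handle respectively by the grazing-collision analysis announced in Section~\ref{sec:grazcol}, by a tree-based comparison of short-range dynamics with a Markov jump process, and by a direct cross-section perturbation argument.

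For the first difference, I would use the two-body scattering estimates together with the decay hypothesis \eqref{eq:potdecay}. Interactions at rescaled impact parameter greater than $R$ produce individual deflections controlled by $\exp(-CR^{3/2+\gamma})$, and summing over the $O(NR^2\eps^2)=O(R^2)$ scatterers found in the $R\eps$-tube around the tagged trajectory up to time $T$ drives the cumulative velocity error to zero provided $R$ grows appropriately in $\log(1/\eps)$ while keeping $R\eps\to0$. Translating this pathwise control of characteristics into a weak-$\star$ bound through a density argument yields closeness of $f^\eps$ and $f^{\eps,R}$ when tested against $h\in C^\infty_0([0,T)\times\cU)$.

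For the second difference I would adopt the tree encoding of the tagged particle's collisional history: each node of the tree marks a binary collision with a background scatterer under the short-range dynamics, parametrised by the impact data, and the truncated Boltzmann density $f^R$ admits a parallel expansion as the density of an explicit Markov jump process with rates given by the truncated cross-section. Matching the two expansions node by node and comparing the initial measure of the particle system with the corresponding $f_0\otimes g^{\otimes k}$, the main task is to show that the measure of pathological trees --- those containing recollisions, overlapping collision cylinders, or simultaneous multi-particle interactions --- vanishes as $\eps\to 0$. The Boltzmann--Grad scaling $N\eps^2=1$ makes each collision cylinder of volume $O(R^3\eps^3)$, so that the overlap probability per unit time is $O(R^3)$, which is acceptable under a slow enough growth of $R$. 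Finally $f^R\to f$ would be deduced from stability of the Boltzmann equation under perturbations of the cross-section, the relevant estimate $\int_\cS(1-\1_{|x|\leq R})|v_\star-v|\dd S\to 0$ again following from \eqref{eq:potdecay}.

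The hard part will be the simultaneous tuning of $R(\eps)$: it must grow fast enough that both the grazing tail and the Boltzmann cross-section perturbation vanish, yet slowly enough that the short-range collision cylinders remain essentially disjoint along the tagged trajectory and the tree expansion closes without loss. The exponential decay $\exp(-C\rho^{3/2+\gamma})$ is calibrated precisely so that these two competing requirements are compatible. With the three-step estimate established, the a priori bounds implied by \eqref{eq:bckgrnd}, combined with conservation of energy along particle trajectories and the uniform $L^\infty$ bound propagated by the flow, give weak-$\star$ compactness of the family $\{f^\eps\}$ in $L^\infty$; passing to the limit in each piece of the decomposition, with the Markov tree expansion ensuring continuity in the nonlinear collision term, identifies the subsequential limit $f$ as a weak solution of \eqref{eq:weaksol} in the sense of the statement.
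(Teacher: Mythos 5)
Your three-step decomposition $|f^\eps-f|\leq|f^\eps-f^{\eps,R}|+|f^{\eps,R}-f^R|+|f^R-f|$ with cutoff $R(\eps)\to\infty$ is exactly the structure of the paper's proof, and the tree-encoding of the collisional history for the middle term matches Section~\ref{sec:MT}. However, there are two genuine gaps.

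First, your argument for $f^\eps-f^{\eps,R}\to0$ treats the grazing contributions as an additive sum: individual deflections of size $\exp(-CR^{3/2+\gamma})$ summed over $O(R^2)$ scatterers. This is not what actually controls the error, and it misidentifies why the exponential decay in \eqref{eq:potdecay} is needed. The long- and short-range trajectories must be compared along the entire history, and each near collision magnifies any incoming positional/velocity discrepancy by a factor of order $1/\eps$ (because of the $\eps^{-1}$ in the force). A Gronwall argument over the (at most $M(\eps)=|\log\eps|$) near collisions produces a multiplier of the form $e^{CRV_1(\eps)^{-1}k}\eps^{-k}$ as in Lemma~\ref{lem:SLdynerr}, which grows faster than any power of $1/\eps$; it is precisely beating this exponential amplification that requires $-\psi'(\rho)\lesssim e^{-C\rho^{3/2+\gamma}}$ together with $R=\eps^{-1/(3+\gamma)}$. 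Moreover, the forces from all $N=\eps^{-2}$ background particles (not only the $O(R^2)$ in the tube) enter the error equation, so the correct bound is of Gronwall type for the full $(z,w)$ system, not an additive sum. Your identified ``competing requirements'' (cylinder overlaps vs. tail decay) would in fact be compatible with polynomial decay; the real bottleneck is the Gronwall compounding.

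Second, for $f^R\to f$ the proposed stability estimate $\int_\cS(1-\1_{|x|\leq R})|v_\star-v|\dd S\to 0$ is not a valid quantity to control: that integral diverges, because the plane $\cS$ is unbounded and the measure $\dd S$ carries no decay. The long-range collision operator is only defined weakly, and the crucial cancellation comes from $h'-h$, which you must bound by $\|\nabla_v h\|_{L^\infty}|v'-v|$ and then use the decay of the deviation angle $\theta(r,w)\lesssim(1+\eta^2 r^s)^{-1}$ for $s>2$ (Lemma~\ref{lem:angdev}) to make the $r\dd r$ integral converge. The comparison $L^R-L$ must therefore be carried out at the level of scattering angles, as in Lemma~\ref{lem:collest}, not at the level of the raw cross-section tail.
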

 
\begin{remark}\normalfont
\begin{enumerate}
 \item Evolution for the density of a tagged particle in a background is in general non-Markovian, and in the setting with a non-local interaction, every particle is always recolliding, and so one does not have a Markovian realisation of the tagged particle density. As shown in \cite{Matthies2018} for hard spheres, there is an associated Markovian evolution where one has removed recollisions. Truncation to obtain a short range interaction is used in \cite{crippa2017} to gain Markovian evolutions for a Vlasov-Poisson system, %with a self consistent electron density. 
 and this motivates the use of a truncation parameter to create a short range Markovian system, to compare with the linear Boltzmann equation.
 \item In comparison to \cite{Ayi2017}, where one can show convergence in the near equilibrium regime for the force with decay of 
 \begin{equation*}
 -\frac{\dd}{\dd\rho} \psi(\rho) \leq e^{-e^{e^{\lambda\left(1+\rho^{2(d-1)}\right)}}}, 
 \end{equation*}
 our decay condition on the potential is very weak. It is unsurprising that we obtain a less restrictive decay assumption, as we have only one colliding particle which means we only need encode information on the collisions of the tagged particle, as opposed to encoding information on every particle.
 \item The paper \cite{desvillettes1999} considers a tagged particle moving in a fixed background, where the tagged particle and a background particle interact via a short range power law potential $\phi(x)= |x|^{-s}$ with $s>2$ and with cut off $|x|\leq \eps^{\gamma-1}$ for some $\gamma\in \left(\frac{15}{17},1\right)$. In this paper they show convergence of the tagged particle density to a solution of the long range linear Boltzmann equation for power law potential $|x|^{-s}$ for $s>2$. Note however that this result does not compare the particle systems for long and short range potentials, which is new analysis in our paper.
\end{enumerate}
\end{remark}

\subsection{Structure of the Proof}

The main objective is to show that the long range particle density $f^\eps$ converges to $f$ which is a solution of the linear Boltzmann equation for $\phi$. Due to the long range dynamics, the particle evolution is not Markovian, and so we introduce a regularisation parameter $R>0$ and a truncated short range potential $\phi^R$, which has support in $B_R$.
One should observe that this cut off is stronger than the Grad cut off from \cite{grad1958}, since we require a short range Markovian particle evolution as well as an integrable Boltzmann collision operator. 

This potential $\phi^R$ then enables one to define probability densities $f^{\eps,R}$ and $f^R$ corresponding to the short range dynamics and the linear Boltzmann equation associated to $\phi^R$, and we note that the particle dynamics then become Markovian, up to recollisions. We then compare the intermediate densities via
\begin{equation*}
 |f^\eps-f|\leq  |f^\eps-f^{\eps,R}|+ |f^{\eps,R}-f^R|+ |f^R-f|
\end{equation*}
and we desire estimates as the spatial distance $\eps \to 0$ with the regularisation parameter $R\to \infty$. The estimates used require $R= \eps^{-1/(3+\gamma)}$ where this exponent is necessary to ensure that estimates in comparing $f^{\eps,R}$ and $f^R$ decay to $0$ as $\eps\to 0$. In actuality we need only $R=\eps^{-\alpha}$ for some $0<\alpha \leq 1/(3+\gamma)$ but we use the specific form for simplicity.

To compare the short range densities $f^{\eps,R}$ and $f^R$ in Section~\ref{sec:MT}, we use the methodology of \cite{Matthies2018}, which enlarges the state space by deftly encoding the entire history of collisions of the tagged particle into a marked tree structure. This enlarging of the state space should be compared with the BBGKY hierarchy, where one enlarges the state space by considering all marginals of the particle dynamics.

We are then required to analyse the contribution from the grazing collisions on the short range particle evolution, which is performed in Section~\ref{sec:partdyn}. We identify the evolutions for which the long and short range particle dynamics have the same collisional structure, since in this situation, the difference between the two systems can be estimated explicitly. To conclude, we are then required to estimate the size of the set for which the long and short range evolutions do not exhibit the same collisions. This analysis is the bottleneck for forcing the decay of the potential to be exponential.

The final comparison in Section~\ref{sec:BEcomp} of the linear Boltzmann equations for potentials $\phi^R$ and $\phi$ uses compactness arguments to show that the solutions $f^R$ have a limit as $R\to \infty$, and then estimates on the collision operators to show that the limit satisfies equation~\eqref{eq:weaksol}. These estimates are similar in spirit to \cite{Ayi2017} and especially to \cite{desvillettes1999}. The latter compares the collision operators by comparing the Boltzmann kernels in terms of the deviation angle for inverse power law potentials decaying faster than $\rho^{-2}$. This is made tractable by the semi-explicit forms of the kernel for such potentials. For us, however, we use the estimates in Section~\ref{sec:scatter}  on the difference between the deviation angles to compare $L^R$ and $L$ explicitly.

 \section{Estimates for the Collision Operator} \label{sec:scatter}
 
 In the statement of the linear Boltzmann equation we introduced the pre-collisional velocities $v'$ and $v'_\star$, as well as the parameter space $\cS$, and we now describe their relation with the interaction potential $\phi$. This specification will then allow for the comparison between pre-collisional velocities for scattering with the potentials $\phi$ and $\phi^R$, as well as providing an estimate the scattering time for the short range potential $\phi^{R}$. The second purpose of this section is to prove existence and uniqueness of solutions for the linear Boltzmann equation with cut off potential $\phi^R$.
 
\subsection{Grazing Collisions and Estimates on Binary Interactions}\label{sec:grazcol}

The notion of grazing collision has not yet been made precise, and we take the following standard definition.
 
 \begin{definition}\label{def:grazcoll} Suppose for spatial scale $\eps$ that $x^\eps$ is the evolution of the tagged particle and $x_s$ the evolution of a background particle.
  A {\bf grazing collision} is an interaction for which 
  \begin{equation*}
   \frac{1}{\eps}\min|x^\eps-x_s|\geq R
  \end{equation*}
and a {\bf near collision} is an interaction with this distance strictly smaller than $R$.
 \end{definition}

 In order to demonstrate that these grazing collisions affect the dynamics weakly, we first specify the form of the cut off used to define $\phi^R$. Suppose that $\phi$ is an admissible long range potential as in Definition~\ref{def:admisspot}, and $R>0$. We then define the related short range potential $\phi^{R}$ by 
 \begin{equation*}
  \phi^{R}=\Lambda^R\phi
 \end{equation*}
with $\Lambda^R\in C^\infty(\R^3)$ a radial strictly decreasing function with $\Lambda^R(x)=1$ for $|x|\leq R-1$ and $\Lambda^R(x) =0 $ for $|x|\geq R$.

The parameters which describe a binary collision are the relative velocity $w=v_\star-v$, and the polar coordinates $(r,\zeta)$ of the plane $\cS$, which one recalls from equation~\eqref{eq:planeS} is
\begin{equation*}
 \cS= \{w\in \R^3: w\cdot (v_\star-v)=x\cdot (v_\star-v)\}.
\end{equation*}
 The distance $r$ corresponds to the minimum distance between the two particles without interaction, and $\zeta$ specifies the direction between them. The range of $r$ is $[0,\infty)$ for the potential $\phi$ and is $[0,R)$ for the potential $\phi^{R}$. The radial symmetry of the collision further means the description does not depend upon $\zeta$, although full details can be found in \cite[Ch.6]{truesdell1980}. 
 
A two body collision maps ingoing velocities $v,v_\star$ to outgoing velocities $v',v'_\star$ of the two particles. For any collision, the scattering map $\sigma(r,\zeta,v,v_\star)=(v',v'_\star)$ takes the general form
\begin{equation}\label{eq:scatop}
\begin{aligned}
    v'&=v+\left( w\cdot\nu(r,\zeta,w)\right)\nu(r,\zeta,w),\\
  v'_\star &=v_\star -\left( w\cdot\nu(r,\zeta,w)\right)\nu(r,\zeta,w)
\end{aligned}
\end{equation}
from conservations of momentum and energy, where the vector $\nu(r,\zeta,w)\in \bS^{2}$ depends upon the potential. The projection of $\nu$ onto the plane $\cS$ is given by
\begin{equation*}
 \nu\cdot (v_\star -v) = |v_\star-v|\,\sin \left(\frac{1}{2}\theta(r,v_\star-v)\right)
\end{equation*}
where $\theta(r,w)$ is called the deviation angle, which is given by the formula
 \begin{equation*}
\theta(r,w) = \pi-2\int_{\rho_\star}^{\infty} \frac{r \dd \rho}{\rho^2\sqrt{1-\frac{2\psi(\rho)}{|w|^2}-\frac{r^2}{\rho^2}}},
\end{equation*}
with $\rho_\star$ the largest root of the denominator. We remark that this integral does converge for all admissible long range potentials and for the related short range potentials. 

For the potential $\phi^{R}$, we add a superscript $R$ to the deviation angle, as well as to the pre-collisional velocities obtained from this in equation \eqref{eq:scatop}, and so we write $\theta^R$, $v^{\prime,R}$ and $v^{\prime,R}_\star$.

\begin{lemma}\label{lem:angdev}
 Suppose that $\phi$ is an admissible long range potential with the condition that there is a $\rho_2>0$ and $s>2$  such that for $\rho>\rho_2$ we have
 \begin{equation*}
\psi(\rho)\leq \rho^{- s} ,
 \end{equation*}
 and suppose that the relative velocity $|w|\geq \eta$ for some $\eta>0$. Then for $R $ such that $R-1/\eta>1+\rho_2$, we have
 \begin{equation}\label{eq:angerr}
  |\theta(r,w)-\theta^{R}(r,w)|\leq \begin{cases}\frac{C}{1+ \eta^2\, r^{s}}& \text{for } r>R-1-1/\eta\\
  &\\\frac{C\,\kappa(r,R)}{\eta^2} &\text{for } r<R-1-1/\eta,  \end{cases}
 \end{equation}
where the constants are independent of $r,R$ and $|w|$ and 
\begin{equation*}
\int_0^{R-1-1/\eta} r \,\kappa(r,R)\dd r =o(1)
\end{equation*}
as $R\to \infty$.
Furthermore, the scattering time for evolution under $\phi^{R}$ can be bounded by
 \begin{equation*}
  \tau_\star(r,w,R) \leq C\,\frac{R}{\eta}.
 \end{equation*}
\end{lemma}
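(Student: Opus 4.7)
All three claims follow from the explicit integral formula
\begin{equation*}
 \theta(r,w) = \pi - 2\int_{\rho_\star}^\infty \frac{r\,\dd\rho}{\rho^2\sqrt{F(\rho)}},\qquad F(\rho) := 1 - \frac{2\psi(\rho)}{|w|^2} - \frac{r^2}{\rho^2},
\end{equation*}
and the analogous formula for $\theta^R$ with $\psi$ replaced by $\psi^R = \Lambda^R\psi$. The key observation is that $\psi \equiv \psi^R$ on $[0,R-1]$, so the two integrands agree on $[\rho_\star, R-1]$ whenever $\rho_\star \leq R-1$.

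For the scattering time under $\phi^R$, I would use $\tau_\star = 2\int_{\rho_\star}^R \dd\rho/\sqrt{|w|^2 F^R(\rho)}$ and decompose the interval at $\rho_\star + \delta$ for a suitable small $\delta$. In the bulk $[\rho_\star + \delta, R]$, the lower bound $F^R(\rho) \geq c(1 - r^2/\rho^2)$ holds once $\psi^R(\rho)/|w|^2$ is much smaller than $1-r^2/\rho^2$, which is ensured by admissibility condition (4) (forcing rapid growth of $\psi$ near the turning point and hence rapid decay away from it) together with $|w|\geq\eta$; this yields a contribution of order $\sqrt{R^2-r^2}/\eta \leq R/\eta$. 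The singular piece near $\rho_\star$ is $O(1/\eta)$ after Taylor expansion of $F^R$ and a use of condition (4) to lower-bound $F^{R\prime}(\rho_\star)$. Summing yields $\tau_\star \leq CR/\eta$.

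For the far-field bound $r > R-1-1/\eta$, once $\eta R$ is large the impact parameter $r$ itself is large, so the orbits for both $\phi$ and $\phi^R$ are nearly rectilinear. The straight-line/impulse approximation, together with $\psi \leq \rho^{-s}$, yields $|\theta|,|\theta^R| \leq C/(|w|^2 r^s) \leq C/(\eta^2 r^s)$, and combining with the trivial $|\theta-\theta^R|\leq 2\pi$ (to cover $\eta^2 r^s < 1$) delivers the claimed $C/(1+\eta^2 r^s)$.

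For the near-field bound $r < R-1-1/\eta$, the gap condition guarantees $\rho_\star = \rho_\star^R \leq R-1$, so the two integrals differ only on $[R-1,\infty)$, where $|\psi - \psi^R| \leq \psi \leq \rho^{-s}$. Linearizing $1/\sqrt{F^R} - 1/\sqrt{F}$ and changing variables to $u = r/\rho$ produces
\begin{equation*}
 |\theta - \theta^R| \leq \frac{C}{\eta^2}\,\kappa(r,R),\qquad \kappa(r,R) := \frac{1}{r^s}\int_0^{r/(R-1)} \frac{u^s\,\dd u}{(1-u^2)^{3/2}}.
\end{equation*}
Verifying $\int_0^{R-1-1/\eta} r\kappa\,\dd r = o(1)$ splits into two sub-regimes: for $r \leq R^{1/2}$, the upper limit $r/(R-1) \leq R^{-1/2}$ stays bounded away from $1$ so the integrand is $O(u^s)$, giving $\kappa \lesssim r/R^{s+1}$ and an $r$-contribution of $O(R^{3/2-s})$; for $R^{1/2} \leq r \leq R-1-1/\eta$ one either tames $(1-u^2)^{-3/2}$ at the upper endpoint using the gap $1-r/(R-1)\geq 1/(\eta(R-1))$, or substitutes the straight-line estimate $|\theta|,|\theta^R|\leq C/(\eta^2 r^s)$ (valid here since $r\geq R^{1/2}$ is much larger than $\eta^{-2/s}$), yielding $O(R^{1-s/2})$; both are $o(1)$ for $s > 2$. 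The principal obstacle is precisely this intermediate-$r$ bound, where the gap $r<R-1-1/\eta$ is essential for controlling the singular integral in $\kappa$, and where tracking $\eta$-dependence carefully is needed for the later comparison of Boltzmann collision operators.
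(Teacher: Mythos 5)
Your treatment of the two angle bounds is sound and, for the near-field case $r<R-1-1/\eta$, actually a bit tidier than the paper's. The paper splits the error into a pure tail term over $[R,\infty)$ (where $\psi^R=0$) plus a cutoff term over $[R-1,R]$, bounds each crudely, and packages them into the explicit $\kappa$ of \eqref{eq:kappa}. You instead note $|\psi-\psi^R|\leq\psi\leq\rho^{-s}$ on all of $[R-1,\infty)$ at once, factor the difference of reciprocal square roots exactly, and change variables, which gives a single integral. Your observation that the naive $r\kappa$ integral only closes when one patches in the straight-line estimate $|\theta|,|\theta^R|\lesssim (\eta^2 r^s)^{-1}$ for intermediate $r\in[R^{1/2},R-1-1/\eta]$ is correct and is a real point --- indeed with the paper's explicit $\kappa$ the integral only vanishes cleanly for $s$ somewhat larger than $2$, so the patch you describe is the way to recover the stated $s>2$ threshold. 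The far-field bound is also in the same spirit as the paper (which extends the Desvillettes--Ricci estimate $\theta\leq C/(1+\eta^2 r^s)$ and then uses $\theta^R\geq 0$).

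The scattering time argument, however, has a genuine gap. You claim that condition (4) of Definition~\ref{def:admisspot} lets you lower-bound $F^R$ (and $(F^R)'(\rho_\star)$) away from the turning point for all relevant impact parameters, but condition (4) is the inequality $\psi'(\rho)+\psi(\rho)\leq 0$ on the fixed interval $(0,\rho_1)$: it controls $\psi$ only near the origin, hence controls the turning point only when $\rho_\star<\rho_1$, i.e.\ only for small $r$. For intermediate $r$ the turning point can lie anywhere in $(\rho_1,R)$ and condition (4) gives nothing. This is exactly why the paper splits the impact parameter into three regions: near $r\approx R$ it invokes the Pulvirenti--Saffirio--Simonella bound on scattering times for compactly supported potentials; for very small $r$ (where $\rho_\star<\rho_1$) it uses condition (4) as in Ayi; and for the intermediate region $r\in\bigl[\tfrac{1}{2\sqrt2}\psi^{-1}(|w|^2/4),\,R/2\bigr]$ it avoids condition (4) entirely, instead bounding $\tau_\star\leq 2R/\min|w_\star|$, computing $|w_\star|=|w|\,r/\rho_\star$ from conservation of energy and angular momentum, and showing $r/\rho_\star$ is bounded below by a constant using monotonicity of $\psi$ and the choice of the lower endpoint. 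You need some replacement for this middle regime; the single-decomposition argument you sketch does not cover it.
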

\begin{remark}
 The key property of the estimate in \eqref{eq:angerr} is that the integral of the right hand side tending to zero implies that potentials with decay faster than $\rho^{-2}$ have their respective collision operators converging to each other, as will be seen in Section~\ref{sec:BEcomp}.
\end{remark}

\begin{proof}
 Firstly, for $r>\rho_2$, by extending results in \cite{desvillettes1999}, we obtain the estimate
 \begin{equation*}
  \theta(r,w)\leq \frac{C}{1+ \eta^2\, r^{s}}
 \end{equation*}
and, together with $\theta^{R}\ge 0$, one obtains $\theta -\theta^{R}\leq \theta $. Therefore we have 
\[                                                                                                                                                                     
\theta-\theta^{R}\leq\frac{C}{1+ \eta^2\, r^{s}}
\]
for $r>\rho_2$, and the choice of $R$ ensures that this holds in particular for $r>R-1-1/\eta$. 

 For $r<R-1-1/\eta$, by analysing the equation for $\rho_\star^R$, it is easy to show that $\rho_\star^R<R-1$. On the region $r<R-1-1/\eta$ one has $\phi^{R}=\phi$ and so we can then conclude that $\rho_\star=\rho_\star^{R}$.  We then split the difference $\theta -\theta^{R}$ into a difference corresponding to the long range nature of $\phi$, and an error which comes from the choice of the smooth cut off $\Lambda^R$. We consider these terms separately.

For the long range term, since $\psi^R=0$ for $\rho>R$, we obtain
\begin{equation*}
\begin{aligned}
  \int_{R}^\infty \frac{r \dd \rho}{\rho^2\sqrt{1-\frac{2\psi(\rho)}{|w|^2}-\frac{r^2}{\rho^2}}}&-\int_{R}^\infty \frac{r \dd \rho}{\rho^2\sqrt{1-\frac{r^2}{\rho^2}}}\\ =&\int_{R}^\infty \frac{r\frac{2\psi(\rho)}{|w|^2}\dd \rho}{\rho^2\sqrt{1-\frac{r^2}{\rho^2}}\sqrt{1-\frac{2\psi(\rho)}{|w|^2}-\frac{r^2}{\rho^2}}\left(\sqrt{1-\frac{r^2}{\rho^2}}+\sqrt{1-\frac{2\psi(\rho)}{|w|^2}-\frac{r^2}{\rho^2}}\right)}
\\\leq& \frac{C}{1-\frac{r^2}{R^2}}\frac{\sup\limits_{\rho\in(R,\infty)}\psi(\rho)}{|w|^2}\int_{R}^\infty \frac{r\dd \rho}{\rho^2\sqrt{1-\frac{r^2}{\rho^2}}}
 \end{aligned}
\end{equation*}
where we have used $C\left(1-\frac{r^2}{R^2}\right)^{-1}$ as an upper bound on the final three square roots in the denominator. 

For the cut off error term, by rearranging and bounding terms similarly to before, we obtain
\begin{equation*}
  \int_{R-1}^R\left(\frac{r}{\rho^2\sqrt{1-\frac{2\psi(\rho)}{|w|^2}-\frac{r^2}{\rho^2}}}-\frac{r}{\rho^2\sqrt{1-\frac{2\psi^{R}(\rho)}{|w|^2}-\frac{r^2}{\rho^2}}}\right) \dd \rho\leq\frac{C\,r\,\norm{(1-\Lambda^R)\phi}_{L^\infty}}{|w|^2R(R-1)\left(1-\frac{r^2}{(R-1)^2}\right)^{3/2}}.
\end{equation*}
Using the fact that $\norm{(1-\Lambda^R)\phi}_{L^\infty}\leq R^{-s}$, and then defining
\begin{equation}\label{eq:kappa}
 \kappa(r,R)= R^{-s} \left(\frac{1}{1-\frac{r^2}{R^2}}+\frac{r}{(R-1)\left(1-\frac{r^2}{(R-1)^2}\right)^{3/2}}\right)
\end{equation}
we observe that it satisfies the integral condition.

In order to gain a suitable bound on the scattering time, we split collisions into situations where the impact parameter is in the three regions $\left[\frac{R}{2},R\right]$, $\left[\frac{1}{2\sqrt2}\psi^{-1}\left(\frac{|w|^2}{4}\right),\frac{R}{2}\right]$ and in $\left[0,\frac{1}{2\sqrt2}\psi^{-1}\left(\frac{|w|^2}{4}\right)\right]$, where we use $\psi^{-1}$ instead of $(\psi^R)^{-1}$ to simplify notation. 
 
 By a simple extension of \cite[Lemma 1]{pulvirenti2014} to potentials supported in $B_R(0)$, we can bound the scattering time in the desired manner for $r\in \left[\frac{R}{2},R\right]$. Furthermore, the conditions required for admissible long range potentials, in particular (4), ensure that we can proceed as in the proof of \cite[Prop. 2]{Ayi2017}, and so one can show for $r\in \left[0,\frac{1}{2\sqrt2}\psi^{-1}\left(\frac{|w|^2}{4}\right)\right]$ the desired inequality.

We are thus left to analyse for $r\in\left[\frac{1}{2\sqrt2}\psi^{-1}\left(\frac{|w|^2}{4}\right),\frac{R}{2}\right]$. We have, for any parameter $r$ in this region, that 
\begin{equation*}
 \tau_\star(r,w,R) \leq \frac{\max_{r'} l(r')}{\min_{r'} w_\star(r')}
\end{equation*}
where $l(r)$ is the length of the particle path, and $w_\star(r)$ is the minimum velocity of the path for impact parameter $r$. The assumptions on $\phi$ ensure that we have that $l(r)\leq 2R$, and so we are left to provide a lower bound on $w_\star(r)$ over this region.

The point at which the particle has lowest relative velocity is the point at which it has maximal potential energy, which is the closest point, namely $\rho_\star$. Then we have from conservation of energy that
\begin{equation*}
 \frac{1}{2}|w|^2-\frac{1}{2}|w_\star|^2 =  \psi^{R}(\rho_\star)-\psi^{R}(R)=\psi^{R}(\rho_\star)
\end{equation*}
since $\psi^{R}(R)=0$ by assumption. Rearranging, we obtain
\begin{equation*}
 |w_\star|= \sqrt{|w|^2-2\psi^{R}(\rho_\star)},
\end{equation*}
and since from the equation for $\rho_\star$ we have
\begin{equation*}
 2\psi^{R}(\rho_\star)= |w|^2\left(1-\frac{r^2}{\rho_\star^2}\right),
\end{equation*}
combining the two equalities results in 
\begin{equation*}
 |w_\star|= \sqrt{|w|^2-2\psi^{R}(\rho_\star)}= |w|\sqrt{1-1+\frac{r^2}{\rho_\star^2}}=|w|\frac{r}{\rho_\star}.
\end{equation*}
We conclude by finding a lower bound on $r/\rho_\star$. On the interval $ [\psi^{-1}\left(\frac{|w|^2}{4}\right),\frac{R}{2}]$, we have $r>\psi^{-1}\left(\frac{|w|^2}{4}\right)$, and so since $\rho_\star\geq r$ we obtain
\begin{equation}\label{eq:star}
 \psi(\rho_\star)\le\psi\left(\psi^{-1}\left(\frac{|w|^2}{4}\right)\right)=\frac{|w|^2}{4},
\end{equation}
and plugging this into the equation for $\rho_\star$ we obtain
\begin{equation*}
 \frac{r}{\rho_\star}=1-\frac{2\psi(\rho_\star)}{|w|^2} \geq 1-\frac{2|w|^2}{4|w|^2}=\frac{1}{2}.
\end{equation*}

The monotonicity of $\psi$ ensures that, for the same relative velocity, on the interval $ \left[\frac{1}{2\sqrt 2}\psi^{-1}\left(\frac{|w|^2}{4}\right),\psi^{-1}\left(\frac{|w|^2}{4}\right)\right]$, the minimum radius for impact parameter $r$ is smaller than the minimum radius for $\psi^{-1}\left(\frac{|w|^2}{4}\right)$, and so using \eqref{eq:star} we obtain 
\begin{equation*}
 \rho_\star \leq 2 \psi^{-1}\left(\frac{|w|^2}{4}\right).
\end{equation*}
Since $r>\frac{1}{2\sqrt 2}\psi^{-1}\left(\frac{|w|^2}{4}\right)$ we have $\rho_\star \leq 4\sqrt{2}r$ as required.
\end{proof}

\subsection{Solutions of the Linear Boltzmann Equation}\label{sec:slns}

This specification of the two body interaction then allows us to comment on the types of solution of the linear Boltzmann equation one can obtain. The primary question is in which sense do the cancellation effects of the gain and loss parts of the collision operator manifest themselves. 

For an admissible long range potential $\phi$, we claim that this cancellation can only be considered in an integrated sense, as opposed to the total variation sense given in equation~\eqref{eq:strongcollop}.
Indeed, the issue with this formulation is that it requires the difference $f'g_\star'-fg_\star$ to compensate the unbounded integration over the impact parameters in $\cS$, so that the product has finite integral. To ensure that $L(f)\in L^1_{\mathrm{loc}}$, if 
\[
f'\,g'_\star-f\,g_\star\sim g_\star\, (f'-f)\sim C\,g_\star\,r^{-s}\,|v_\star-v|,
\] 
which would be natural to assume if the background was at equilibrium and $f$ was differentiable, then one formally obtains
\begin{equation*}
\begin{aligned}
  \int_{\R^3}\int_0^{2\pi}\int_0^\infty (f'\,g'_\star- f\,g_\star) \,r\,|v_\star-v|\dd r \dd \zeta \dd v_\star&=  C\int_{\R^3}\int_0^\infty g_\star \,r^{1-s}\,|v_\star-v|^2\dd r \dd v_\star\\
  &\leq \int_{\R^3}(|v|^2+|v_\star|^2)\,g_\star \dd v_\star \int_0^\infty r^{1-s}\dd r
\end{aligned}
\end{equation*}
and so the difference $f'-f$ must decay with $s>2$ for this to converge on compact sets of $v$. To deduce this decay one requires the product $f\,g$ to be Lipschitz, and so in particular for an arbitrary $f\in L^1$, strong solutions would not exist.

This thus results in us requiring only a weak formulation, since by testing with $C^\infty$ functions, we ensure that the difference $h'-h$ for the test functions does decay of the right order of magnitude due to Lemma~\ref{eq:angerr}.

This situation is in contrast to the evolution for the potential $\phi^{R}$. Here one can split the collision operator into $
 L^{R} = L^{R}_{+} - L^{{R}}_{-}$
for 
\begin{equation*}\label{eq:srcollop}
 \begin{cases}
  L^{R}_{+}(f) &=\int_{\R^3} \int_{B_R} f(v^{\prime,R})\, g(v_\star^{\prime,R})\, |v_\star-v| \dd S\dd v_\star\\[6pt]
  L^{{R}}_{-}(f)&= f(v)\int_{\R^3} \int_{B_R}  g(v_\star)\, |v_\star-v| \dd S\dd v_\star,
 \end{cases}
\end{equation*}
since due to the cut off, both are in $L^1(\cU)$ for any $f\in L^1(\cU, (1+|v|^2)\dd x \dd v)$. Furthermore, the norms of these operators depend on $R$ and in particular tend to infinity as $R\to \infty$.

One can then show that the operator $-v\cdot \nabla_x -L^{R}_{-}$ is a closed operator from its domain to $L^1$, and so by defining $\cT$ to be the semi-group generated by  $-v\cdot \nabla_x -L^{R}_{-}$, we can consider mild solutions of the linear Boltzmann equation. These are a function $f^R\colon [0,T]\times \cU \to \R$ such that for all $t\in [0,T]$ we have $f^R(t)\in L^1(\cU,(1+|v|^2)\dd x \dd v)$ and for which 
\begin{equation}\label{eq:mildsol}
 f^R(t,x,v) = \cT(t)f_0(x,v) +\int_0^t \cT(t-s) L^{R}_{+}(f^R)(s,x,v) \dd s.
\end{equation}
We remark that if such a mild solution exists and is unique, then by \cite{ball1977} it is also the unique weak solution in the sense of equation \eqref{eq:weaksol}. We now show that mild solutions exist.

\begin{proposition}\label{prop:SRexist}
For any $T>0$, there exists a unique mild solution \eqref{eq:mildsol} to the linear Boltzmann equation on $[0,T]$ with interaction potential $\phi^{R}$ such that 
 \begin{equation*}
  \int_\cU (1+|v|^2)\,f^{R}(t,x,v)\dd x \dd v<\infty
 \end{equation*}
for all $t\in [0,T]$.
\end{proposition}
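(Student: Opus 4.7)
The plan is to construct the mild solution as the unique fixed point of the Duhamel map
\[
\Phi(f)(t):=\cT(t)f_0+\int_0^t \cT(t-s)\,L^{R}_+(f(s))\,\dd s
\]
in the Banach space $X_T := C([0,T];\,L^1(\cU,(1+|v|^2)\,\dd x\,\dd v))$, following the standard framework for linear kinetic equations with integrable cross section.

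First I would make the semigroup explicit: since $L^R_-$ is multiplication by
\[
\nu(v):=\int_{\R^3}\int_{B_R\cap\cS}g(v_\star)|v_\star-v|\,\dd S\,\dd v_\star,
\]
we have $\cT(t)h(x,v)=e^{-t\nu(v)}h(x-tv,v)$. The cross section $\int_{B_R\cap\cS}\dd S=\pi R^2$ combined with $g\in L^1((1+|v|^2)\,\dd v)$ gives $0\le \nu(v)\le C_R(1+|v|)$, so $\cT$ is a positive contraction $C_0$-semigroup on $L^1((1+|v|^k)\,\dd x\,\dd v)$ for every $k\ge 0$. Next, using the involution $(v,v_\star)\mapsto(v^{\prime,R},v^{\prime,R}_\star)$ at fixed $(r,\zeta)$, which has unit Jacobian and preserves $|v_\star-v|$, one obtains
\[
\int (1+|v|^2)\,L^R_+(f)\,\dd x\,\dd v=\int (1+|v^{\prime,R}|^2)\,f(v)\,g(v_\star)\,|v_\star-v|\,\dd S\,\dd v_\star\,\dd v\,\dd x.
\]
Writing $(1+|v^{\prime,R}|^2)-(1+|v|^2)=2(v\cdot\nu)(w\cdot\nu)+(w\cdot\nu)^2$ and integrating first over the azimuthal angle on the disc $B_R\cap\cS$, the linear-in-$v$ part averages to zero; only a contribution involving $\sin^2(\theta^R/2)$ survives, which is integrable against $r\,\dd r$ by Lemma~\ref{lem:angdev}. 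Together with the $L^\infty$-decay of $g$ in \eqref{eq:bckgrnd} this yields a bound of the form $\|L^R_+(f)\|_{L^1((1+|v|^2)\dd x\dd v)}\le C_R\|f\|_{L^1((1+|v|^2)\dd x\dd v)}$.

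These two estimates imply $\|\Phi(f)-\Phi(h)\|_{X_\tau}\le C_R\tau\|f-h\|_{X_\tau}$, so $\Phi$ is a strict contraction whenever $\tau<1/C_R$. Banach's theorem produces a unique mild solution on $[0,\tau]$, which extends to $[0,T]$ by iteration on the contiguous intervals $[\tau,2\tau],\ldots,[k\tau,T]$, the contraction constant being independent of the starting time; the $(1+|v|^2)$-moment stays finite on $[0,T]$ by a Gronwall argument applied to $\int_\cU(1+|v|^2)f^R(t)\dd x\dd v$. Positivity of $f^R$ follows since $\cT$ and $L^R_+$ are positivity-preserving and $f_0\ge 0$, while uniqueness in the class of mild solutions results from applying the same contraction estimate to the difference of two solutions.

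The main obstacle is the weighted $L^1$-estimate for $L^R_+$: the naive bound $|v^{\prime,R}|^2\le|v|^2+|v_\star|^2$ combined with $|v_\star-v|\le|v|+|v_\star|$ generates a $|v_\star|^3\,g(v_\star)$ contribution that is not integrable under the assumptions in \eqref{eq:bckgrnd}, so the azimuthal cancellation on the impact-parameter disc together with the decay of $\sin^2(\theta^R/2)$ supplied by the cross-section estimates of Section~\ref{sec:scatter} are both essential to close the moment bound, and hence the fixed-point argument.
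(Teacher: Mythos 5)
The fixed-point strategy does not close, and the step where you claim it does contains two errors.

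First, the azimuthal cancellation you invoke is only partial. Decompose $\nu=\sin\bigl(\tfrac{\theta^R}{2}\bigr)\hat w+\cos\bigl(\tfrac{\theta^R}{2}\bigr)\hat e(\zeta)$, with $\hat e(\zeta)$ rotating in the plane $\cS$. Then $v\cdot\nu$ contains both a $\zeta$-independent piece $\sin\bigl(\tfrac{\theta^R}{2}\bigr)(v\cdot\hat w)$ and the oscillatory piece $\cos\bigl(\tfrac{\theta^R}{2}\bigr)(v\cdot\hat e)$; only the latter averages to zero. Carrying out the $\zeta$-average correctly gives
\[
\left\langle (1+|v^{\prime,R}|^2)\right\rangle_\zeta=(1+|v|^2)+\sin^2\bigl(\tfrac{\theta^R}{2}\bigr)\bigl(|v_\star|^2-|v|^2\bigr),
\]
so the linear-in-$v$ contribution does \emph{not} vanish; it combines with the quadratic one to produce $\sin^2\bigl(\tfrac{\theta^R}{2}\bigr)(|v_\star|^2-|v|^2)$.

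Second, and more fundamentally, the zeroth-order piece $(1+|v|^2)$ in this average is fatal for your claimed bound. After the involution it gives
\[
\int_\cU (1+|v|^2)\,L^R_+(f)\dd x\dd v
= \int_\cU (1+|v|^2)\,\nu(v)\,f(v)\dd x\dd v + \text{(correction with $\sin^2$)},
\]
and since $\nu(v)=\pi R^2\int g(v_\star)|v_\star-v|\dd v_\star\asymp C_R(1+|v|)$, the first integral requires the $(1+|v|^3)$-moment of $f$. In other words $L^R_+$ maps $L^1\bigl((1+|v|^{k+1})\dd x\dd v\bigr)$ into $L^1\bigl((1+|v|^k)\dd x\dd v\bigr)$, always losing a power of $|v|$ through the relative-speed factor in the cross section; it is \emph{not} a bounded operator on $L^1\bigl((1+|v|^2)\dd x\dd v\bigr)$, and no finite-power weight fixes this. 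Consequently the estimate $\|\Phi(f)-\Phi(h)\|_{X_\tau}\le C_R\tau\|f-h\|_{X_\tau}$ cannot hold, and Banach's fixed point theorem does not apply in $X_T$. Your concluding paragraph correctly identifies the danger but incorrectly asserts that the azimuthal cancellation and the angle decay resolve it; they remove the cross term $|v||v_\star|^2$ that you flag, but not the $\nu(v)(1+|v|^2)$ term, which is the actual obstruction.

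The paper avoids this issue by never trying to bound $L^R_+$ on a weighted space. Instead it works with the identity $\int L^R_+ f\,\dd v=\int \nu f\,\dd v$ (mass conservation), places the problem in the Kato--Voigt substochastic perturbation framework, and verifies the Banasiak--Arlotti honesty criterion (local boundedness of $\nu$, a Carleman kernel representation, and a uniform tail bound on the kernel). This yields a conservative $C_0$-semigroup on $L^1$, and the $(1+|v|^2)$-moment propagation is then proved as a separate a posteriori estimate. If you want to keep a constructive flavour, the viable route is the monotone iteration of Kato's theorem followed by the honesty check, not a Banach contraction in a moment-weighted space.
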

\begin{proof}
We aim to apply \cite[Thm. 10.28]{banasiak2006} which will imply that $-v\cdot \nabla_x +L^{R}$ generates an honest semigroup, meaning that it is mass and energy preserving. This then implies existence and uniqueness due to \cite[Thm 3.1.12]{arendt2011}. The key conditions we must check are the following:
 \begin{enumerate}
\item Writing $L^R_-(f)= \nu(v)f(v)$, we require that for any $V>0$ there exists $M<\infty$ such that for $|v|\leq V$ we have $\nu(v)\leq M$.
\item The operator $L^R_+$ is an integral operator with kernel given by $k(v,v')$ with $k$ measurable and non-negative such that 
  \begin{equation*}
   \int_{\R^3} k(v',v)\dd v' =\nu(v)
  \end{equation*}
\item There exists $C>0$ such that for any fixed $V>0$ we have 
  \begin{equation*}
   \int_{|v'|>V} k(v',v) \dd v' \leq C
  \end{equation*}
  for almost all $|v|\leq V$. 
 \end{enumerate}

To satisfy (1), we can bound $\nu(v)$ easily, since for any $V$ with $|v|\leq V$ we have
 \begin{equation*}
|  \nu(v)|\leq C\, V\,R^2\norm{(1+V^2)\,g}_{L^1}.
 \end{equation*}
Using the Carleman representation, as in \cite{villani1999}, we can rewrite $ L^{R}_{+}(f)$ as
\begin{equation*}
  L^{R}_{+}(f) = \int_{\R^3}\frac{f(v')}{|v-v'|^2}\,\int_{E_{vv'}} g(v'_\star) \,b^R\left(\frac{|v'-v|}{|v'-v'_\star|},\,|v'-v'_\star|\right) \dd v'_\star \dd v'
\end{equation*}
where 
\begin{equation*}
 E_{vv'}= \{u\in \R^3 : u\cdot(v'-v)=v\cdot (v'-v)\}
\end{equation*}
and $b^R$ is the cross section for the potential $\phi^{R}$ in terms of the deviation angle $\theta^R$ and the relative velocity $v_\star-v$. Defining 
\begin{equation*} 
k(v,v')= \frac{1}{|v-v'|^2}\,\int_{E_{vv'}} g(v'_\star) \,b^R\left(\frac{|v'-v|}{|v'-v'_\star|},\,|v'-v'_\star|\right) \dd v'_\star 
\end{equation*}
we have the kernel of the gain part of the collision operator. Another simple coordinate change gives the second part of condition (2).

Finally, to show condition (3), we proceed as in \cite[Thm 2.1]{arlotti2007}, and rewrite $v'_\star=v'+V_2$ to obtain
\begin{equation*}
k(v,v') = \frac{1}{|v-v'|^2}\int\limits_{V_2\cdot(v'-v)=0}  g(v' +V_2) \,b^R\left(\frac{|v'-v|}{|v'-v-V_2|},\,|v'-v-V_2|\right) \dd V_2.
\end{equation*}
Transforming coordinates of $b^R$ from $r, |w|$ into $\theta^R, |w|$ we observe that
\begin{equation*}
 b^R(\theta,|w|) \leq C\, \sin \theta\, |w|
\end{equation*}
and therefore
\begin{equation*}
k(v,v') \leq \frac{C}{|v-v'|}\int\limits_{V_2\cdot(v'-v)=0}  g(v' +V_2) \dd V_2.
\end{equation*}
The same arguments as in \cite{Matthies2018}, where the calculations follow \cite[Ex 10.29]{banasiak2006},  with the conditions~\eqref{eq:bckgrnd}, show that,
\begin{equation*}
\int_{|v|>V}k(v,v')\dd v \leq C\int_0^\infty r \int_0^\infty z \sup_{|v|^2=|z|^2+|r|^2}g(v) \dd z \dd r<\infty
\end{equation*}
which thus gives condition (3). Arguments in \cite{Matthies2018} can be used to show the estimate
\begin{equation*} 
 \int_{\cU}(1+|v|^2) \, f^R(t,x,v) \dd x \dd v <\infty
\end{equation*}
which concludes the proof. 
\end{proof}

 \section{Marked Trees}\label{sec:MT}
  
In attempting to exploit the Markovian nature of the linear Boltzmann equation, we aim to create a Markovian description of the particle dynamics.  This cannot be performed on the space $\cU$, and so we enlarge the state space. The issue with a recollision free dynamic are the historic recollisions, and so we  introduce a space of marked trees with height 1 to encode the collisions of the tagged particle, with the aim of taking care of history dependence. The outcomes of the section are the following. Firstly we aim to obtain equations describing the evolution of the marginals $f^{\eps,R}$ and $P^R$ on a subspace of trees corresponding to recollision free dynamics. Secondly we aim to show that the set of these recollision free trees, or ``good trees,'' has almost full measure, and that the empirical density $f^{\eps,R}$ converges to the idealised density $P^R$ for the linear Boltzmann equation associated to $\phi^R$.

First, let $(x^{\eps,R},v^{\eps,R})$ be the solutions of the equations~\eqref{eq:partdens} with potential $\phi^R$. From this evolution, one can specify the geometric parameters $(r,\zeta)$ of each collision, as well as the velocities of the background particles.  One then encodes these parameters in the space of marked trees, which is the following.
  
\begin{definition}\label{def:MT}
 The set of marked trees $\cMT$ is defined by
 \begin{multline*}
  \cMT:=\Big\{(x_0,v_0),(t_1,r_1,\zeta_1,v_1),\dots,(t_n,r_n,\zeta_n,v_n)\,|\, (x_0,v_0)\in \cU,\, t_i\in [0,T],\\r_i\in[0,R],\, \zeta_i\in [0,2\pi),\,v_i \in \R^3,\, n\in \N\cup\{0\} \Big\}
 \end{multline*}
and we furthermore define, for a tree $\Phi \in \cMT$, the function $n(\Phi)=n$ to be the number of collisions. We set $\cMT_k=\{\Phi\in \cMT:n(\Phi)=k\}$. Also, defining 
\begin{equation*}
\tau(\Phi):=\begin{cases}0 & n(\Phi)=0\\\max_{1\leq j \leq n }t_j& \mathrm{else},\end{cases}
\end{equation*}
we typically denote the final marker by 
\begin{equation*}
 (\tau, \bar r,\bar \zeta,\bar v):=(t_n,r_n,\zeta_n,v_n).
\end{equation*}
\end{definition}
We now describe the density $f^{\eps,R}$ on $\cMT$ derived from the particle dynamics with potential $\phi^R$. Associated to each tree $\Phi \in \cMT$ are dynamics $(x^{\eps,R},v^{\eps,R})$ which correspond to evolution starting at $(x_0,v_0)$ and encountering a collision with a background particle corresponding to each node of the marked tree. These evolutions can be inconsistent with the dynamics given by solutions of \eqref{eq:partdens} in two ways. Firstly, these dynamics could be unphysical, i.e. they miss collisions, and secondly, the dynamics described could encompass recollisions. Without the removal of recollisions, we are not able to uniquely specify from the tree the dynamics of the background particles in each collision. 

The first error is nullified by the density $f^{\eps,R}$ not being supported on such trees. The second is removed by restricting the dynamics onto a space $\cG(\eps)$ of trees that is given by a combination of  Definition~\ref{def:goodtrees} below and Definition~\ref{geps}, so that, among other restrictions, recollisions are not observed. The latter definition contains the various parameters required to show convergence of $f^{\eps,R}$ to $P^R$, as these are not needed for the specification of the particle density.

\begin{definition}\label{def:goodtrees}
 Trees $\Phi\in\cMT$ exhibit {\bf good dynamics} at spatial scale $\eps>0$ if they satisfy the following properties.
 \begin{enumerate}
\item The velocities have a minimum separation, meaning that $ \min_{i=1,\dots,n(\Phi)} |v^\eps(t_i^-)-v_i|>0$.
\item The times $t_i$ are such that for all $i=2,\dots , n(\Phi)$ we have $t_i-t_{i-1}>0$.
\item There is no initial overlap at diameter $\eps$ meaning that for all $j=1,\dots,N$ we have $ |x_0-x_j(0)|>R\eps$.
\item The trees are recollision free at diameter $\eps$ meaning, for all $0\leq \eps'\leq R\eps$, for all $1\leq j \leq n(\Phi)$ and for all $t\in[0,T]\setminus (t_j,t_j+\tau_\star^j)$, one has
 \begin{equation*}
  |x^\eps(t)-(x_j+tv_j)|>\eps'.
 \end{equation*}
 \end{enumerate}
\end{definition}
In order to compare the particle density with the corresponding density for the Boltzmann evolution, we write an evolution equation for the particle density on the space $\cMT$. This density evolves from a tree $\Phi$ by addition of a node which represents a collision. To describe this, we denote $\overline \Phi$ as the tree $\Phi$ with the final collision removed.  This evolution equation is in general a complicated expression in the form
\begin{multline}\label{eq:partdensmt}
  -\int_0^T\int_{\cMT}f^{\eps,R}_t (\Phi) \, \pd_t h_t(\Phi) \dd \Phi \dd t- \int_{\cMT_0} \xi \,f_0(x_0,v_0)\, h_0(\Phi)\dd \Phi\\= \int_{\cMT}\cQ^{\eps,+}(\Phi)\,f^{\eps,R}_\tau(\overline\Phi) \,h_\tau(\Phi)\dd \Phi-\int_0^T \int_{\cMT} \cQ^{\eps,-}(\Phi)\,f_t^{\eps,R}(\Phi)\, h_t(\Phi)\dd \Phi \dd t 
\end{multline}
 for $h$ a test function, where the terms on the right hand side describe the effect of a collision on the particle density. In particular, $\cQ^{\eps,+}$ describes the rate at which one encounters a background particle with final collision given by the parameters in $\Phi$, and so the gain of density onto $\Phi$ from a collision. $\cQ^{\eps,-}$ describes the probability of the tagged particle colliding with another background particle, and so the loss of density from $\Phi$. The tree based formulation should be thought of akin to the Lagrangian formulation of the evolution equation, and so one does not have a spatial derivative.

While the coefficients $\cQ^{\eps,+}$ and $\cQ^{\eps,-}$ describe correlations between all collisions, these build up slowly, and for trees exhibiting good dynamics, the dependency upon the tree itself is sufficiently weak to make it worth stating the limiting equation for the Boltzmann dynamics. 

Analogously to $(x^{\eps,R},v^{\eps,R})$, we define
\begin{equation*}
 (x^R,v^R)\colon [0,T)\times \cMT \to \cU
\end{equation*}
by 
\begin{equation*}
 \begin{cases}
  v^R(t) = v_0 & t\in [0,t_1)\\
  v^R(t) = \sigma^R_1(r_i,\zeta_i,v^R(t_{i-1}),v_i) &t \in [t_i,t_{i+1})\\
  x^R(t) = x_0 + \int_0^t v^R(s) \dd s& 
 \end{cases}
\end{equation*}
to be the corresponding Boltzmann dynamics on $\cMT$, where one notes that $\sigma^R_1$ is the first component of the scattering map as defined in \eqref{eq:scatop}.  Defining, for $h$ a suitable test function, the relationship
\begin{equation*}
 \int_{\cU} f^R(t,x,v)\, h(x,v) \dd x \dd v =: \int_\cMT P_t^R(\Phi)\, h(x^R,v^R) \dd \Phi
\end{equation*}
and then inferring from the Dyson-Philips formula a notion of collision in $f^R$, we can show existence of such a function, and then that it satisfies the evolution equation on $\cMT$ of 
\begin{multline}\label{eq:lbedensmt}
  -\int_0^T\int_{\cMT}P^{R}_t (\Phi) \, \pd_t h_t(\Phi) \dd \Phi \dd t- \int_{\cMT_0} f_0(x_0,v_0)\, h_0(\Phi)\dd \Phi\\= \int_{\cMT}\cQ^{+}(\Phi)\,P^{R}_\tau(\overline\Phi) \,h_\tau(\Phi)\dd \Phi-\int_0^T \int_{\cMT} \cQ^{-}(\Phi)\,P_t^{R}(\Phi)\, h_t(\Phi)\dd \Phi \dd t, 
\end{multline}
where one should recall that $\overline \Phi$ is the tree $\Phi$ with final collision removed.

The similarity in the forms of \eqref{eq:partdensmt} and \eqref{eq:lbedensmt} then enables the easy comparison of the particle and Boltzmann densities on $\cMT$. We first find the explicit forms of the parameters in these equations.
 
\subsection{Short Range Particle Dynamics on Marked Trees}

We now derive an effective evolution equation for the particle dynamics on $\cMT$. While we derive a strong form of the equation, we remark that the equation takes the weak form in equation \eqref{eq:partdensmt}. The main aim for this subsection is to calculate the coefficients $\cQ^{\eps,+}$ and $\cQ^{\eps,-}$, at least for the trees that satisfy Definition~\ref{def:goodtrees}.

To ensure that the particle evolution describes all collisions, we first must ensure that there are no collisions initially, and so remove those background particles that initially collide. To ensure that $f_0^{\eps,R}$ is a probability measure, we multiply $f_0$ by 
\begin{equation*}
 \xi(\eps,R)= \left(1-\frac{4}{3}\,\pi\, R^3\,\eps^3\right)^N.
\end{equation*}
The jump rate $\cQ^{\eps,+}$  is calculated by describing the probability of finding a background particle at the correct place to collide with the tagged particle at time $t$. We must normalise by ensuring that the background particle has never collided at an historic time with the tagged particle. Therefore, recalling the notation $\bar r$ in Definition~\ref{def:MT}, the creation of density on $\Phi$ occurs with probability 
\begin{equation*}
    \cQ^{\eps,+}[f^{\eps,R}_t](\Phi)=\begin{cases}
(1-\gamma(\eps,\tau))\1_{t=\tau(\Phi)} \,\1_{t-\tau(\overline \Phi)>\delta}\,\frac{ \bar r\, |\bar v-v^{\eps,R}(t)|\, g(\bar v)\, f^{\eps,R}_t(\overline \Phi)}{1-\eta_\tau^{\eps,R}(\Phi)} &n(\Phi)>0\\
0&n(\Phi)=0
 \end{cases}
\end{equation*}
where the term
\begin{equation*}
 \eta_t^{\eps,R} (\Phi)= \int_\cU g(v_\star)\, \left(1-\1_t^{\eps,R}[\Phi](x_\star,v_\star)\right)\dd x_\star \dd v_\star
\end{equation*}
describes the normalisation factor ensuring that the tagged particle is not recolliding with the background, and where
\begin{equation*}
 \1_t^{\eps,R}[\Phi](x_\star,v_\star)= \begin{cases}1& \text{if for all } s\in(0,t) \text{ we have } |x^{\eps,R}_\Phi(s)-(x_\star+sv_\star)|>\eps\\0&\text{else}\end{cases}
\end{equation*}
is the indicator function for ensuring there are no historic recollisions with tagged particle starting at $(x_\star,v_\star)$, and where $\gamma(\eps,\tau)=n(\overline\Phi)\eps^2$.

The loss term for density on marked tree $\Phi$ is given similarly, except one is required to calculate the probability of encountering any background particle that has not yet collided. This means that the loss term is given by
\begin{equation*}
  \cQ^{\eps,-}(\Phi)=(1-\gamma(\eps,t))\1_{t-\tau(\Phi)>\delta}\,\frac{\int_{\R^3} \int_{B_R} g(v_\star)\,|v^{\eps,R}(t)-v_\star|\dd S\dd v_\star -c(\eps,R)}{1-\eta_t^{\eps,R}(\Phi)}
\end{equation*} 
where $c(\eps,R)$ is a term that quantifies the probability that the background particle has already collided with the tagged particle, and $\gamma(\eps,t)=n(\Phi)\eps^2$.

The main addition to this analysis from considering hard sphere dynamics in \cite{Matthies2018} are the extra restrictions on the dynamics to ensure each collision includes only two particles, which is the use of the term $\delta$. The restrictions in Definition~\ref{def:goodtrees} then enable the relatively simple specification of the evolution of the particle density. We have the following strong form for the evolution.

\begin{lemma}\label{lem:partontree}
 The tagged particle density function on a tree $\Phi$ that satisfies Definition~\ref{def:goodtrees} evolves via the equation 
 \begin{equation}\label{eq:srpart}
 \begin{cases}
  \pd_t f^{\eps,R}_t(\Phi)= \cQ^{\eps,+}[f^{\eps,R}_t](\Phi)-f^{\eps,R}_t(\Phi)\,\cQ^{\eps,-}(\Phi)\\
  f^{\eps,R}_0(\Phi) = \xi(\eps,R)\1_{\cMT_0}(\Phi)f_0(x_0(\Phi),v_0(\Phi)).
 \end{cases}
\end{equation}
\end{lemma}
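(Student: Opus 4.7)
My plan is to derive the evolution equation \eqref{eq:srpart} by pushing forward the Liouville flow on the $(N+1)$-particle phase space $\cU\times\cU^N$ onto $\cMT$ via the natural map $\Psi$ that assigns to each initial configuration the pair consisting of $(x_0,v_0)$ and the time-ordered collision parameters $(t_i,r_i,\zeta_i,v_i)$ encountered by the tagged particle under the dynamics \eqref{eq:partdens} with potential $\phi^R$. On the good-dynamics subset of Definition~\ref{def:goodtrees}, the map $\Psi$ is injective, thanks to the recollision-freeness in condition (4), and the collision parameters depend smoothly on initial data, thanks to the minimum velocity separation in condition (1). This lets me define $f^{\eps,R}_t$ unambiguously as the pushforward density.

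The initial condition is then verified directly. At $t=0$ only trees in $\cMT_0$ are populated, and good dynamics forces $|x_0-x_i(0)|>R\eps$ for every background particle. Since the $N$ background positions are i.i.d. uniform on $\bT^3$ (with $|\bT^3|=1$) and their velocities have density $g$, integrating them out yields
\begin{equation*}
f^{\eps,R}_0(\Phi) = f_0(x_0,v_0)\prod_{i=1}^N \int_{\bT^3\setminus B_{R\eps}(x_0)}\dd x_i = f_0(x_0,v_0)\bigl(1-\tfrac{4}{3}\pi R^3\eps^3\bigr)^N = \xi(\eps,R)\,f_0(x_0,v_0),
\end{equation*}
as required, after restricting to $\cMT_0$.

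For the evolution, I would treat two regimes separately. On $t\in(\tau(\Phi),T]$ no new collision occurs at $\Phi$ itself, so by preservation of Liouville measure the only change in $f^{\eps,R}_t(\Phi)$ comes from loss of mass to trees $\Phi'$ with one extra node. The rate at which an as-yet unused background particle collides next is computed by changing variables from its initial position $x_\star$ to the collision parameters $(t,r,\zeta)$: a flow-box argument around the collision event gives Jacobian $r\,|v_\star-v^{\eps,R}(t)|$, so integrating against $g(v_\star)$ over $\R^3\times B_R$ produces the bulk rate, from which $c(\eps,R)$ subtracts the contribution of particles already recorded in $\Phi$. The normalisation $(1-\eta_t^{\eps,R}(\Phi))^{-1}$ conditions a candidate on being fresh along the trajectory described by $\Phi$, since $1-\eta_t^{\eps,R}(\Phi)$ is precisely the $g$-weighted measure of admissible $(x_\star,v_\star)$, and $(1-\gamma(\eps,t))=1-n(\Phi)\eps^2$ reflects the reduced number $N-n(\Phi)$ of particles still available. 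Combining with $\1_{t-\tau(\Phi)>\delta}$, which comes from condition~(2) together with the good-dynamics separation convention, yields $\cQ^{\eps,-}(\Phi)$. At the instant $t=\tau(\Phi)$ the same change of variables restricted to the prescribed final parameters $(\bar r,\bar\zeta,\bar v)$ delivers the transition from $\overline\Phi$ to $\Phi$, matching $\cQ^{\eps,+}[f^{\eps,R}_\tau](\Phi)$.

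The main obstacle is the Jacobian computation together with the careful accounting of the conditioning factors. Injectivity of $\Psi$ on the good-tree set is indispensable for writing $f^{\eps,R}_t$ in closed form, and the factors $(1-\eta)^{-1}$ and $(1-\gamma)$ are precisely the non-Markovian corrections that the tree enlargement is designed to absorb. Verifying that each part of Definition~\ref{def:goodtrees} plays its appointed role (non-degeneracy of the Jacobian, well-separated binary collisions, no initial overlap, no recollisions), and that $\1_{t-\tau(\overline\Phi)>\delta}$ genuinely encodes the single-collision structure, is the step that demands the greatest care.
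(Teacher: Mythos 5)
The proposal is correct and follows essentially the same route as the paper, which itself supplies no details beyond ``a careful calculation of the relevant probabilities and is \cite{Matthies2018} mutatis mutandis.'' Your pushforward and change-of-variables sketch identifies the right ingredients: the flow-box Jacobian $r\,|v_\star-v^{\eps,R}(t)|$, the role of recollision-freeness in letting the tree uniquely reconstruct the tagged trajectory and the colliding scatterers, the source of the factor $\xi(\eps,R)$ from the no-initial-overlap constraint, and the interpretation of $(1-\eta)^{-1}$, $(1-\gamma)$ and $c(\eps,R)$ as the recollision and exhaustion corrections.

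Two small imprecisions are worth flagging. First, calling $\Psi$ ``injective'' on the good set is not literally right: $\Psi$ maps the $6(N+1)$-dimensional phase space onto $\cMT_k$, which is $6(k+1)$-dimensional, so the $6(N-k)$-dimensional freedom of the non-colliding background is compressed; the property you actually need and use is that a good tree $\Phi$ determines $(x^{\eps,R},v^{\eps,R})$ and the colliding scatterers uniquely, so the pushforward density is well defined after integrating out the rest. Second, the indicator $\1_{t-\tau(\Phi)>\delta}$ does not come from condition (2) of Definition~\ref{def:goodtrees}, which only asserts $t_i-t_{i-1}>0$; it is inserted into $\cQ^{\eps,\pm}$ by fiat to enforce that each interaction is an isolated binary collision, with the quantitative $\delta$-separation appearing only in Definition~\ref{geps}. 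Neither point affects the structure of the argument, and the overall approach matches the one the paper intends.
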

The proof is a careful calculation of the relevant probabilities and is \cite{Matthies2018} mutatis mutandis.

We remark that this evolution equation provides an $L^\infty$ estimate on $f^{\eps,R}$ of
\begin{equation*}
 f_t^{\eps,R}(\Phi) \leq (4RV_2(\eps))^{n(\Phi)}\leq (4RV_2(\eps))^{M(\eps)}.
\end{equation*}

The crucial observation on the coefficients in equation \eqref{eq:srpart} is that they only depend on the tree $\Phi$ based upon its implicit particle evolution $(x^{\eps,R},v^{\eps,R})$ and its parameters of the final collision, and so in this manner the dependency upon the tree itself is weak. 

 \subsection{Boltzmann Equation on Marked Trees}
 
 We now prove that the probability density on $\cMT$ exists, and satisfies an equation of the form \eqref{eq:lbedensmt}. The main aim of this subsection is to show that we have an idealised equation on the space $\cMT$, and that a solution of this equation can be related to a mild solution of the linear Boltzmann equation in the form \eqref{eq:mildsol}.  
 
 The gain in density on tree $\Phi$ is given by the probability of encountering a background particle that collides with the tagged particle, so has intensity given by
 \begin{equation*}
  \cQ^+[P^R_t](\Phi)=\begin{cases}
                       \1_{t=\tau(\Phi)}\,P^R_t(\overline \Phi)\,g(\bar v)\,\bar r\,|v^{\eps,R}(\tau^-)-\bar v| & n(\Phi)>0\\
                       0&n(\Phi)=0,
                      \end{cases}
\end{equation*}
and the loss of density is given by the probability of encountering any background particle, so is given by 
\begin{equation*}
   \cQ^-_t(\Phi)=\int_{\R^3} \int_{B_R} g(v_\star)\,|v^{\eps,R}(t)-v_\star| \dd S\dd v_\star.
\end{equation*}
These observations then give the following lemma for the evolution equation corresponding to the linear Boltzmann equation. 
 \begin{lemma}\label{lem:BEontree}
 There exists a solution $P^R\colon [0,T]\to L^1(\cMT)$ to the equation
 \begin{equation}\label{eq:srBE}
  \begin{cases}
   \pd_t P^R_t(\Phi) =\cQ^+[P^R_t](\Phi)-P^R_t(\Phi)\,\cQ^-_\tau(\Phi)\\
   P^R_0(\Phi) = f_0(x_0,v_0 )\,\1_{\cMT_0}(\Phi)
  \end{cases}
 \end{equation}
for $f_0\in L^1(\cU, (1+|v|^2)\dd x \dd v)$ and $f_0\geq 0$. Then defining, for $\Omega \subset \cU$,
\begin{equation*}
 S_t(\Omega)=\{\Phi \in \cMT : (x^R(t),v^R(t))\in \Omega\},
\end{equation*}
we have
\begin{equation}\label{eq:treeeqcomp}
 \int_\Omega f^R(t,x,v)\dd x \dd v= \int_{S_t(\Omega)}P^R_t(\Phi)\dd \Phi
 \end{equation}
for $f^R$ the unique mild solution of the linear Boltzmann equation for $\phi^R$, as in equation~\eqref{eq:mildsol}. 
\end{lemma}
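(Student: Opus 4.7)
The proof plan has three stages: explicit recursive construction of $P^R$ layer-by-layer on $\cMT$, verification of an $L^1$ bound to ensure global existence, and identification with the mild solution through a Dyson--Phillips type expansion.

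For the construction, observe that \eqref{eq:srBE} stratifies naturally by $n(\Phi)$. On $\cMT_0$ the gain vanishes and the equation reduces to pure decay, yielding
\begin{equation*}
P^R_t(\Phi) = f_0(x_0,v_0)\,\exp\left(-\int_0^t \cQ^-_s(\Phi)\dd s\right).
\end{equation*}
For $\Phi \in \cMT_n$ with $n\geq 1$, the indicator $\1_{t=\tau(\Phi)}$ in $\cQ^+$ is interpreted, in the weak form \eqref{eq:lbedensmt}, as a jump at $t=\tau(\Phi)$ transferring density from $\overline{\Phi}\in \cMT_{n-1}$ to $\Phi$, followed by exponential decay. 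This gives the Duhamel-type formula
\begin{equation*}
P^R_t(\Phi) = \1_{t\geq \tau}\,P^R_\tau(\overline{\Phi})\,g(\bar v)\,\bar r\,|v^{R}(\tau^-)-\bar v|\,\exp\left(-\int_\tau^t \cQ^-_s(\Phi)\dd s\right),
\end{equation*}
which defines $P^R_t$ recursively on all layers from the initial data.

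For the $L^1$ bound I proceed by induction on $n$. Integrating \eqref{eq:srBE} formally over $\cMT_n$ and summing, the gain on layer $n$ matches the loss from layer $n-1$, so mass is conserved level-by-level. To make this rigorous I control the $(1+|v|^2)$-weighted norm using the moment hypothesis \eqref{eq:bckgrnd} on $g$, in the same spirit as in Proposition~\ref{prop:SRexist}; this prevents blow-up of high-velocity tails along the chain of collisions and yields summability, so that $P^R\in L^1(\cMT)$ with $\norm{P^R_t}_{L^1(\cMT)}=\norm{f_0}_{L^1(\cU)}$.

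For the identification, define $\tilde f^R$ through duality by $\int_\cU \tilde f^R\, h\dd x \dd v := \int_\cMT P^R_t(\Phi)\,h(x^R,v^R)\dd \Phi$ for test functions $h$. Stratifying by $\cMT_n$ and performing the change of variables $(r_i,\zeta_i,v_i) \mapsto (v^{\prime,R}, v^{\prime,R}_\star)$ already used in Proposition~\ref{prop:SRexist}, the contribution from $\cMT_0$ becomes $\cT(t)f_0$, and the contribution from $\cMT_n$ becomes the $n$-fold iterated term $\int \cT(t-s_n)L^R_+ \cdots L^R_+ \cT(s_1)f_0 \dd s_1 \cdots \dd s_n$ of the Dyson--Phillips expansion of the mild solution. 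Summing over $n$ yields a function that satisfies \eqref{eq:mildsol}; by uniqueness in Proposition~\ref{prop:SRexist} it coincides with $f^R$, which establishes \eqref{eq:treeeqcomp}. The main obstacle lies precisely here: one has to track how the Jacobians between $(r,\zeta,v_\star)$ and the post-collisional velocities interact with the loss-rate exponentials $\exp(-\int\cQ^-_s\dd s)$ along successive free-flight segments, and to verify that the velocity factor $|v^R(\tau^-)-\bar v|$ from the gain term matches the $|v_\star-v|\dd S \dd v_\star$ appearing in the kernel of $L^R_+$. Once this bookkeeping is controlled, convergence of the series is immediate from the second stage and uniqueness of mild solutions completes the argument.
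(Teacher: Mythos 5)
Your proposal is correct and follows essentially the same route as the paper: construct $P^R_t$ recursively layer by layer via the Duhamel formula $P^R_t(\Phi)=\1_{t\ge\tau}\,e^{-(t-\tau)\cQ^-_\tau(\Phi)}\,P^R_\tau(\overline\Phi)\,g(\bar v)\,\bar r\,|v^{R}(\tau^-)-\bar v|$, push forward layer $\cMT_n$ to the $n$-fold Dyson--Phillips term, and invoke uniqueness of the mild solution from Proposition~\ref{prop:SRexist} to obtain \eqref{eq:treeeqcomp}. The only cosmetic difference is that you establish the $L^1$ summability separately by a moment induction, whereas the paper deduces it directly from the Dyson--Phillips convergence and the identification $\sum_j P^{(j)}=f^R$.
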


In comparison with \eqref{eq:srpart}, the coefficients in the formula \eqref{eq:srBE} are simple, and yet of the same form. Furthermore, they describe a dependence upon the tree $\Phi$ that can be seen in the proof to be consistent with the linear Boltzmann equation.

\begin{proof}
The proof proceeds by constructing explicitly such a function $P^R_t$. We give the barest details as the proof proceeds as in \cite{Matthies2018} mutatis mutandis.

 Using \cite[Thm 10.4]{banasiak2006}, the Hille-Yosida theorem and \cite[Thm 3.1.12]{arendt2011}, there exists a unique mild solution to 
 \begin{equation*}
  \begin{cases}
   \pd_t P_t^{(0)}(x,v)= -v\cdot \nabla_x P_t^{(0)}(x,v) -L^{R}_{-}(P_t^{(0)})(x,v)\\
   P_0^{(0)}(x,v)=f_0(x,v).
  \end{cases}
 \end{equation*}
We then define 
\begin{equation*}
 \begin{cases}
  P^R_t(\Phi)=P_t^{(0)}(x^R(t),v^R(t)) &  \Phi \in \cMT_0\\[4pt]
  P^R_t(\Phi)= \1_{t\ge \tau}\, e^{-(t-\tau)\,\cQ_\tau^-(\Phi)}\,P^R_\tau(\overline\Phi)\,g(\bar v)\,\bar r\,|v^{\eps,R}(\tau^-)-\bar v|& \mathrm{otherwise},
 \end{cases}
\end{equation*}
and we have a unique solution to this equation. Defining 
\begin{equation*}
 P_t^{(j)}(S)= \int_{S\cap\cMT_j}P^R_t(\Phi)\dd \Phi
\end{equation*}
it is an easy application of \cite[Prop 3.31]{arendt2011} to show that $\sum_{j=0}^\infty P_t^{(j)}$ is a mild solution of the linear Boltzmann equation \eqref{eq:mildsol}. By uniqueness of solutions we have $\sum_{j=0}^\infty P_t^{(j)}=f^R$, from which one can deduce that $P^R\in L^1(\cMT)$ and equation~\eqref{eq:treeeqcomp}. 
\end{proof}

\subsection{Markovian Convergence}

We have created probability densities $f^{\eps,R}$ and $P^R$ corresponding to short range particle dynamics and the linear Boltzmann equation associated to $\phi^R$. Furthermore, for a subset of $\cMT$ both densities admit a Markovian evolution equation. We now aim to exploit this Markovian evolution to show convergence of the density $f^{\eps,R}$ to $P^R$ in the limit $\eps\to 0$.

We first state the precise orders of magnitude of the restrictions of good trees that we use.

\begin{definition}[Good Trees]\label{geps}
 Let
 \begin{equation*}
  M,V_2\colon (0,1) \to \R_+, \hspace{1cm} M(\eps)=|\log\eps|=V_2(\eps)
 \end{equation*}
 and 
 \begin{equation*}
  V_1,\delta\colon (0,1)\to \R_+, \hspace{1cm} V_1(\eps)=\frac{1}{|\log\eps|},\,\,\, \delta(\eps)=\sqrt\eps.
 \end{equation*}
The set $\cG(\eps)$ of good trees is then the set of trees $\Phi \in \cMT$ that exhibit good dynamics (satisfy Definition~\ref{def:goodtrees}), and that satisfy the following:
 \begin{enumerate}
  \item $
   \max\left\{\sup\limits_{t\in[0,\tau]}|v^{\eps,R}(t)|,\max |v_j|\right\}\leq V_2(\eps)$.
\item For all $i=1,\dots,n(\Phi)$, we have $
 \min\limits_{i=1,\dots,n(\Phi)} |v^{\eps,R}(t_i^-)-v_i|\geq V_1(\eps)$.
\item For all $\Phi \in \cG(\eps)$, we have $n(\Phi)\leq M(\eps)$,
\item For all $i=2,\dots,n(\Phi)$ we have $
 |t_i-t_{i-1}|>\delta(\eps)$.
 \end{enumerate}
\end{definition}

This choice of parameters should be thought of as a technical tool that enables us to deduce convergence of $f^{\eps,R}$ to $P^R$. The important requirements that are needed are that $ V_1(\eps),\,\, \delta(\eps) \to 0$ and $ V_2(\eps),\,\,M(\eps)\to \infty$, as well as ensuring that $R\eps \ll \delta V_1$ which comes from the bound on the scattering time in Lemma~\ref{lem:angdev}. From these, the specific form of the parameters chosen are used so that the estimates of the difference between the jump intensities and decay rates for $f^{\eps,R}$ and $P^R$ tend to zero as $\eps \to 0$.

\begin{lemma}\label{srdensity}
 Suppose that $f^{\eps,R}$ and $P^R$ are the probability densities on $\cMT$ corresponding to the equations \eqref{eq:partdens} for short range potential $\phi^{R}$, and for the Boltzmann equation \eqref{eq:weaksol} associated to $\phi^{R}$. We then have for all $t\in [0,T]$ that
 \begin{equation*}
  \norm{f_t^{\eps,R}-P^R_t}_{TV} \to 0
 \end{equation*}
as $\eps \to 0$ with $N\eps^{2}=1$.
\end{lemma}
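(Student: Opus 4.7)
The overall strategy is to decompose
\begin{equation*}
\norm{f_t^{\eps,R}-P_t^R}_{TV}\le\int_{\cG(\eps)^c}\bigl(f_t^{\eps,R}+P_t^R\bigr)\dd\Phi+\int_{\cG(\eps)}\bigl|f_t^{\eps,R}-P_t^R\bigr|\dd\Phi
\end{equation*}
and show each piece vanishes as $\eps\to 0$ under $N\eps^2=1$. On $\cG(\eps)$ both densities satisfy the strong Markovian equations \eqref{eq:srpart} and \eqref{eq:srBE}, which have the same structural form, so a direct Duhamel plus Gr\"onwall argument applies. Outside $\cG(\eps)$ the equations are unavailable, so I would bound the two mass integrals separately by estimating the measure of the violation of each condition in Definitions~\ref{def:goodtrees} and~\ref{geps}.

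For the complement, each defining condition of a good tree is treated separately. The velocity bounds fail on a set whose mass is controlled by the moment assumptions \eqref{eq:bckgrnd} on $g$ and $f_0$, combined with conservation of kinetic energy under $\phi^R$ scattering. The cap $n(\Phi)\le M(\eps)$ fails with probability $O(e^{-cM(\eps)})$ by Markov's inequality applied to the expected collision count, which is uniformly $O(1)$ while $M(\eps)\to\infty$. The inter-collision separation condition fails with probability $O(M(\eps)^2\delta(\eps))$ by a union bound, and the no-initial-overlap condition fails with probability $O(NR^3\eps^3)=O(R^3\eps)=O(\eps^{\gamma/(3+\gamma)})\to 0$. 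The recollision-free condition is the most delicate and is addressed in the final paragraph.

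On $\cG(\eps)$, set $D_t:=f_t^{\eps,R}-P_t^R$; subtracting the two evolution equations gives
\begin{equation*}
\pd_t D_t=\bigl(\cQ^{\eps,+}[f_t^{\eps,R}]-\cQ^+[P_t^R]\bigr)-\bigl(f_t^{\eps,R}\cQ^{\eps,-}-P_t^R\cQ^-\bigr),
\end{equation*}
with initial datum $(\xi(\eps,R)-1)f_0=O(R^3\eps)\,f_0$. On $\cG(\eps)$ the coefficient corrections are pointwise $o(1)$: the prefactor $\gamma(\eps,t)\le M(\eps)\eps^2\to 0$; the normalisation denominator $1-\eta_t^{\eps,R}$ differs from $1$ by at most the swept volume of past trajectories, which is $O(RV_2(\eps)M(\eps)\eps^2)\to 0$ under $N\eps^2=1$ and $R=\eps^{-1/(3+\gamma)}$; and the correction $c(\eps,R)$ is of the same order. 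Adding and subtracting $P_t^R\cQ^{\eps,-}$ and $\cQ^{\eps,+}[P_t^R]$ splits the right-hand side into a $P_t^R$-weighted coefficient-error term of total size $o(1)$ and a $|D_t|$-weighted propagation term bounded by $\cQ^{\eps,-}\le CRV_2(\eps)^2$. Integrating over $\cG(\eps)$ and applying Gr\"onwall's inequality then closes the argument, using the $L^\infty$ bound from Lemma~\ref{lem:partontree} to absorb any remaining pointwise blow-up.

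The principal obstacle is the recollision condition, item~(4) of Definition~\ref{def:goodtrees}, and the closely related bound on $\eta_t^{\eps,R}$. Two candidate interaction times separated by at least $\delta(\eps)=\sqrt\eps$ at minimum relative velocity $V_1(\eps)=1/|\log\eps|$ force a drift between the two trajectories of order $\delta V_1-O(\tau_\star)$, where the scattering time $\tau_\star\le CR/V_1$ comes from Lemma~\ref{lem:angdev}. Under the scaling $R=\eps^{-1/(3+\gamma)}$ with $\gamma>0$, the interaction radius $R\eps=\eps^{1-1/(3+\gamma)}=o(\sqrt\eps/|\log\eps|)=o(\delta V_1)$, so no recollision can occur. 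A union bound over the $O(NM(\eps))$ candidate (background particle, past collision) pairs, each contributing a recollision probability of order $R^3\eps^3$ from the swept interaction volume, gives total recollision mass $O(NM(\eps)R^3\eps^3)=O(M(\eps)R^3\eps)\to 0$. This is the one step where the precise scaling of $R$ and all four good-tree parameters of Definition~\ref{geps} are simultaneously essential.
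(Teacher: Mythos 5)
Your decomposition is two-sided: you bound $\int_{\cG^c}f_t^{\eps,R}+\int_{\cG^c}P_t^R$ and $\int_{\cG}|f_t^{\eps,R}-P_t^R|$ separately. The paper's argument is deliberately \emph{one-sided}: it establishes only the ratio lower bound $f_t^{\eps,R}(\Phi)\ge(1-\rho_t^\eps(\Phi))\,\xi(\eps,R)\,P_t^R(\Phi)$ on $\cG(\eps)$, with $\sup_\Phi\rho_t^\eps\to 0$, and then recovers total-variation control purely from $P_t^R(\cMT\setminus\cG(\eps))\to 0$ and the fact that $P_t^R$ and $f_t^{\eps,R}$ are (near-)probability measures. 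This one-sidedness is not a stylistic choice: there is no evolution equation for $f_t^{\eps,R}$ off $\cG(\eps)$, so $f_t^{\eps,R}(\cG(\eps)^c)$ is not directly accessible. In your scheme you would need to bound it independently, which you do not do; you estimate only $P_t^R(\cG^c)$-type quantities and implicitly transfer them to $f^{\eps,R}$ without justification. Likewise, the one-sided bound is a \emph{multiplicative} (ratio) estimate propagated collision-by-collision through the recursion \eqref{eq:rhoterm}: between collisions both densities decay with nearly the same exponential rate, so the ratio moves by only $e^{\delta\sup\cQ^-}$ per epoch, giving a total factor $\mu^{M(\eps)}\to 1$. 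Your ``direct Duhamel plus Gr\"onwall'' on the additive difference $D_t$ is not a self-contained ODE -- the gain term feeds from the parent tree $\overline\Phi$, so the problem is hierarchical in $n(\Phi)$, and the closure you sketch hides exactly the induction on collision number that the paper carries out.

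Two of your concrete estimates for $P_t^R(\cG(\eps)^c)$ are also unsound. You claim the expected collision count is ``uniformly $O(1)$'' so that Markov's inequality controls $\{n(\Phi)>M(\eps)\}$; but the jump rate is $\cQ^-_t(\Phi)=\int_{\R^3}\int_{B_R}g_\star\,|v-v_\star|\dd S\dd v_\star\sim R^2\bigl(\|g\|_{L^1(1+|v|)}+|v|\bigr)$, which diverges as $R=\eps^{-1/(3+\gamma)}\to\infty$; the expected number of encoded near collisions is therefore not $O(1)$, and a crude first-moment Markov bound does not give the required decay against $M(\eps)=|\log\eps|$. The paper defers this to the analogue of \cite[Prop.~7]{Matthies2018}, which is not the elementary moment argument you propose. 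Similarly, your recollision discussion conflates two separate points: the scaling $R\eps=o(\delta V_1)$ is a consistency condition that makes the good-tree construction possible, but it does not by itself ``prevent'' recollisions; the bound $P_t^R(\{\text{recollision}\})\to 0$ comes from a swept-volume estimate over past trajectory segments, with the union bound indexed by collision epochs rather than by background particles (there is no ``per-particle'' structure in the $P^R$ measure). As written, the proposal therefore misses the one-sided ratio mechanism that makes the paper's argument close, and two of its auxiliary size estimates would fail quantitatively under the stated scaling of $R$.
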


\begin{proof}
The proof proceeds as in \cite{Matthies2018}, whereby we claim that if we have an inequality, for $\Phi \in \cG(\eps)$, of the form 
 \begin{equation}\label{eq:ineq}
  f_t^{\eps,R}(\Phi)-\xi(\eps,R)\,P^R_t(\Phi)\geq -\rho^\eps_t(\Phi)\,\xi(\eps,R)\,P^R_t(\Phi),
 \end{equation}
for some $\rho^\eps_t(\Phi)$ uniformly bounded in $\Phi$ with this bound decaying to $0$ as $\eps\to 0$, together with 
\begin{equation*}
 P^R_t(\cMT\setminus \cG(\eps))\to 0,
\end{equation*}
then we can conclude. 

Indeed, if the inequality \eqref{eq:ineq} holds, then we can deduce that
\begin{equation*}
 \int_{S\cap \cG(\eps)} \left(f_t^{\eps,R}(\Phi)-\xi(\eps,R)\,P^R_t(\Phi)\right)\dd \Phi\geq -\xi(\eps,R)\sup_{\Phi \in \cMT}\rho^\eps_t(\Phi)
\end{equation*}
where we have bounded $P^R_t(S)\leq 1$.
We then have, for any $S\subset \cMT$, the inequality
 \begin{equation*}
  \begin{aligned}
   P^R_t(S)- f_t^{\eps,R}(S)&\leq P^R_t(S\cap \cG(\eps))+P^R_t(S\setminus \cG(\eps))- f_t^{\eps,R}(S\cap G(\eps))\\ 
   &= P^R_t(S\cap \cG(\eps))-\xi(\eps,R)\,P^R_t(S\cap \cG(\eps))\\&\hspace{1cm}-\left( f_t^{\eps,R}(S\cap \cG(\eps))-\xi(\eps,R)\,P^R_t(S\cap \cG(\eps))\right) +P^R_t(S\setminus \cG(\eps))\\
   &\leq (1-\xi(\eps,R))\,P^R_t(S\cap \cG(\eps))+\xi(\eps,R)\,\sup_{\Phi \in \cMT}\rho^\eps_t(\Phi)+P^R_t(S\setminus \cG(\eps))\\
  \end{aligned}
 \end{equation*}
 and then the final two terms tend to zero by assumption. By analysing the form of $\xi$, one observes that this tends to $1$ as $\eps\to 0$. 
 
 We are thus left with justifying the two assumptions we made. To address the first, we define, for $\mu =e^{\delta \,\sup_t \cQ_t^-(\Phi)}$ and for $k=1,\dots,n(\Phi)$, the quantities
  \begin{equation}\label{eq:rhoterm}
  \begin{aligned}
     \rho^{\eps,0}_t(\Phi):&= 2\,t\,\eta_t^{\eps,R}(\Phi)\, \sup_{t\in[0,T]} \cQ^-_t(\Phi)\,\left(1+\delta\,\sup_{t\in[0,T]} \cQ^-_t(\Phi)\right),\\
       \rho_t^{\eps,k}(\Phi)&= \mu \left(\eps+(1-\eps)\rho_t^{\eps,k-1}(\Phi)\right) + \rho_t^{\eps,0}(\Phi)
  \end{aligned}
\end{equation}
and then set $\rho_t^\eps(\Phi)=\rho_t^{\eps,n(\Phi)}(\Phi)$.
We now analyse the decay of this as $\eps\to 0$. We first note that 
\begin{equation*}
  \sup_{t\in[0,T]} \cQ^-_t(\Phi)\leq C\,R^2\,\left(\int (1+|v_\star|^2)\,g_\star\dd v_\star+V_2(\eps)\right)
\end{equation*}
via an easy application of the triangle inequality. Furthermore, by analysing the area of space for which the indicator function $\1_t^\eps[\Phi]=0$, we can conclude that
\begin{equation*}
 \eta_t^{\eps,R}(\Phi) \leq C\, R^2 \,\eps^2 \,T \,\left(\int (1+|v_\star|^2)\,g_\star\dd v_\star+V_2(\eps)\right).
\end{equation*}

The decay of $M(\eps),\,V_2(\eps)$ and $ \delta$ in Definition~\ref{geps} then ensures that all terms in $\rho_t^\eps(\Phi)$ converge to $0$ as $\eps\to 0$ as required.

We now show the inequality \eqref{eq:ineq}. The aim is to prove this by induction on the number of nodes of $\Phi$. We show the inequality using the following three steps;
\begin{enumerate}
 \item First we show that one has an estimate for the deviation of $f_t^{\eps,R}-\xi(\eps,R)P^R_t(\Phi)$ at time $t>\tau(\Phi)$, given the difference at time $\tau(\Phi)$, together with $\eta^{\eps,R}_t$ and $\rho_t^{\eps,0}$.
 \item Secondly we show that $1-\frac{1-\gamma(\eps,t)}{1-\eta_\tau^{\eps,R}(\Phi)}\leq \eps$.
 \item Finally the above two steps are combined in an iterative argument, which gives the precise form  of $\rho_t^\eps(\Phi)$ as in equation \eqref{eq:rhoterm}.
\end{enumerate}

The second two steps are the same as in \cite{Matthies2018} and so are omitted. We briefly elucidate step (1) as the inclusion of the time separation $\delta$ makes the analysis different. 

We first split time period into $[t,t+\delta]$ and $[t+\delta,T]$. Comparing the equations for $P^R_t$ and $f_t^{\eps,R}$ one can say that, for $t\in [\tau,\tau+\delta]$ we have
\begin{equation*}
 f_t^{\eps,R}(\Phi)-\xi(\eps,R)\,P^R_t(\Phi) = f_\tau^{\eps,R}(\Phi)-\xi(\eps,R)\,P^R_\tau(\Phi) +\xi(\eps,R)\, P^R_t(\Phi) \left(e^{(t-\tau)\,\cQ_\tau^-(\Phi)}-1\right)
\end{equation*}
by direct integration. Since the final term is positive, we can easily bound it from below by $-\rho_t^{\eps,0}(\Phi)\, P^R_t(\Phi)$. For $t>\tau+\delta $ the method is similar in nature to $t<\tau+\delta$, and is exactly as in \cite{Matthies2018}.

Finally, to show $P^R_t(\cMT\setminus \cG(\eps))\to 0$ we note that the constraints of $\cG(\eps)$, and the order of magnitude of the constraints $M,\,V_1,\,V_2$ and $\delta$  restrict onto a set of vanishing measure in the limit $\eps\to 0$. The proof follows \cite[Prop. 7]{Matthies2018}
mutatis mutandis.
\end{proof}
 
 \section{Short and Long Range Particle Dynamics}\label{sec:partdyn}
 
 As we have compared short range and linear Boltzmann densities in the previous section, to prove Theorem~\ref{thm:main} we must provide an analysis of how the inclusion of grazing collisions in the dynamics alters these densities. We first show how the use of the potential $\phi$ changes the particle dynamics in comparison with dynamics with potential $\phi^R$.  The main aim of this section is to show that $f^{\eps,R}-f^\eps$ converges weakly to $0$ as $\eps\to 0$ with $R$ a function of $\eps$.
 
 Since the long range evolution is not Markovian, the evolution equation derived in the previous section for $f^{\eps,R}$ is not useful, other than in providing $L^\infty$ estimates. Furthermore, since evolution under potential $\phi$ has no well defined notion of collision, relating long range dynamics in a deterministic relationship with a tree $\Phi \in \cMT$ is meaningless, and so this section contains a major difference to \cite{Matthies2018}. The issue is that the background particles not described by $\Phi$ still interact with the tagged particle in the setting of long range dynamics. 
 To take care of this, for each tree $\Phi\in \cG(\eps)$, we introduce random variables $(x^\eps,v^\eps)$ that describe the long range evolution under the $n(\Phi)$ background scatterers in $\Phi$, where we assume the remaining $N-n(\Phi)$ background are randomly placed so that they do not interact with the short range dynamics $x^{\eps,R}$ described in $\Phi$.
 
 \subsection{Long and Short Range Evolutions with the same near collisions}\label{sec:nearcoll}
 
 The first estimate we obtain is an estimate on the maximum error between the short range evolution $(x^{\eps,R},v^{\eps,R})$ and the random variables $(x^\eps,v^\eps)$ corresponding to the long range evolution, under the assumption that they both encounter the same background particles in near collisions. We use the form of the equations~\eqref{eq:partdens} to compare the solutions with differing interaction potentials as follows.
  
 \begin{lemma}\label{lem:SLdynerr}
 Let $\phi$ be an admissible potential with decay as in \eqref{eq:potdecay}, and let $k\in \N$ with $k\leq M(\eps)$, and let $R= \eps^{-1/(3+\gamma)}$.
 Let $(x^{\eps,R},v^{\eps,R})$ be the evolution for tree $\Phi\in \cG(\eps)\cap\cMT_k$, and let $(x^\eps,v^\eps)$ solve, for $t\in [0,T]$, the system
 \begin{equation*}
 \begin{cases} \dot x =v\\ \dot v= -\frac{1}{\eps}\sum_{i=1}^N \nabla \phi(\frac{x -x_i}{\eps})\end{cases}
\end{equation*}
with the same initial conditions and background as in $\Phi$, and assume that the remaining  $N-k$ background particles are distributed such that for all $t\in[0,T]$ we have
 \begin{equation*}
|x^\eps(t)-x_i|>R\eps,\,\,|x^{\eps,R}(t)-x_i|>R\eps.
 \end{equation*}
 Furthermore, suppose that there are times such that $|x^\eps(\cdot)-x_i(\cdot)|\leq R\eps$. 
 Then there exists $C>0$ depending on $\phi$ and $T$ such that, for all $t\in [0,T]$ we have,
 \begin{equation*}
  |x^{\eps,R}(t)-x^\eps(t)|+|v^{\eps,R}(t)-v^\eps(t)|\leq b(\eps)
\end{equation*}
 where 
 \begin{equation}\label{eq:beps}
 b(\eps) = Ce^{-C(1/\eps)^{\gamma/(3+\gamma)}}
\end{equation}
where $\gamma$ comes from the exponent in the decay \eqref{eq:potdecay} of the potential $\phi$.
\end{lemma}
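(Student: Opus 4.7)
The plan is to derive a Grönwall-type estimate for $\Delta(t):=|x^{\eps,R}(t)-x^{\eps}(t)|+|v^{\eps,R}(t)-v^{\eps}(t)|$ by splitting the difference of the two force fields into contributions from the $k$ tree particles (near collisions) and the $N-k$ remaining background particles (pure grazing). First I subtract the two ODE systems to obtain
\[
\frac{d}{dt}(v^{\eps,R}-v^{\eps})=-\frac{1}{\eps}\sum_{i=1}^{N}\left[\nabla\phi^R\left(\frac{x^{\eps,R}-x_i}{\eps}\right)-\nabla\phi\left(\frac{x^{\eps}-x_i}{\eps}\right)\right],
\]
insert $\pm\nabla\phi^R((x^\eps-x_i)/\eps)$ inside the sum, and treat the two groups of particles separately.

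For each of the $N-k$ remaining particles the hypothesis forces $|x^{\eps,R}-x_i|>R\eps$ and $|x^\eps-x_i|>R\eps$ for all $t\in[0,T]$, so $\nabla\phi^R$ vanishes identically and only the tail of $\phi$ enters. By \eqref{eq:potdecay} and the scaling $N\eps^{2}=1$, each such particle contributes at most $C\eps^{-1}e^{-CR^{3/2+\gamma}}$ to the force difference, so the combined far-particle contribution to $|\tfrac{d}{dt}(v^{\eps,R}-v^{\eps})|$ is bounded by $C\eps^{-3}e^{-CR^{3/2+\gamma}}$ uniformly in $t$. Integrating over $[0,T]$ produces an additive error of the same order.

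For the $k$ near-collision particles, the good-tree constraints in Definition~\ref{geps} ensure that at each time at most one tree particle lies within $R\eps$ of either trajectory, and Lemma~\ref{lem:angdev} bounds each scattering duration by $CR/V_1(\eps)$. The single-particle force difference then splits into (i) a region where both scaled relative positions lie in $B_{R-1}$, where $\phi=\phi^R$ and the difference is controlled by $\norm{D^2\phi}_\infty|x^{\eps,R}-x^\eps|/\eps$, and (ii) a transition annulus $[R-1,R]$ on which the potentials differ by at most $Ce^{-CR^{3/2+\gamma}}$ in $C^1$, contributing an additive term of the same order per collision. Rather than running Grönwall on the raw ODE (which would yield an unusable $e^{C/\eps^{2}}$ per collision from the $1/\eps^{2}$ in the linearisation), I would reformulate each collision as a map $(x,v)(t_i^-)\mapsto(x,v)(t_i^+)$: by admissibility and the lower bound $|v^{\eps,R}(t_i^-)-v_i|\ge V_1(\eps)$, this map is Lipschitz with a constant $L(\eps)$ depending only polynomially on $1/V_1=|\log\eps|$, and the discrepancy between its $\phi$- and $\phi^R$-versions is $O(e^{-CR^{3/2+\gamma}})$ by an exponential-decay refinement of Lemma~\ref{lem:angdev}.

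A discrete Grönwall over the $k\le M(\eps)=|\log\eps|$ collisions, with the far-particle additive contributions inserted on each inter-collision interval, then yields
\[
\Delta(t)\le L(\eps)^{M(\eps)}\,M(\eps)\,C\left(e^{-CR^{3/2+\gamma}}+T\eps^{-3}e^{-CR^{3/2+\gamma}}\right).
\]
Since $R^{3/2+\gamma}=\eps^{-(3/2+\gamma)/(3+\gamma)}$ has strictly larger exponent than $\eps^{-\gamma/(3+\gamma)}$, every polynomial-in-$\eps^{-1}$ prefactor is absorbed for $\eps$ small, leaving the claimed $b(\eps)=Ce^{-C\eps^{-\gamma/(3+\gamma)}}$. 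The main obstacle is exactly the collision step: one must avoid the naive Grönwall blow-up by exploiting the two-body scattering structure (admissibility plus the velocity lower bound $V_1(\eps)$) to promote the bare $\norm{D^2\phi}_\infty/\eps^{2}$ estimate into a bounded Lipschitz constant for the incoming-to-outgoing map, and one must upgrade Lemma~\ref{lem:angdev} to exponential scattering-map closeness using the exponential decay~\eqref{eq:potdecay}.
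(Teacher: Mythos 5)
Your overall strategy---split the force into the $k$ tree particles and the $N-k$ grazing particles, run a discrete Gr\"onwall over the collisions, and let the tail decay of $\phi$ absorb the exponential prefactors---is the same in spirit as the paper's. Two places where you diverge are worth noting. First, the paper inserts an intermediate process $(\bar x^\eps,\bar v^\eps)$, the \emph{long-range} evolution driven only by the $k$ tree particles; it then estimates $x^{\eps,R}$ vs.\ $\bar x^\eps$ (same $k$ scatterers, cut-off vs.\ uncut potential) via the argument of \cite{Ayi2017}, and separately $\bar x^\eps$ vs.\ $x^\eps$ (same long-range potential, $k$ vs.\ $N$ scatterers) via a linear second-order ODE for $(z,w)=(\bar x^\eps-x^\eps,\bar v^\eps-v^\eps)$. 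This decoupling means the far-particle forcing never has to be threaded through the near-collision Gr\"onwall loop at the same time as the cut-off error, which keeps the bookkeeping clean; your direct subtraction is workable but you will have to carry both sources of error through the same iteration. Second, you do not need an ``exponential-decay refinement of Lemma~\ref{lem:angdev}'': the paper never compares the $\phi$- and $\phi^R$-scattering maps at the level of deviation angles in this lemma. It compares the ODE forcings directly and uses the elementary estimate $\norm{(1-\Lambda^R)\nabla\phi}_{L^\infty}\le Ce^{-CR^{3/2+\gamma}}$, which is an immediate consequence of~\eqref{eq:potdecay}.

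The one genuine gap is your claim that the incoming-to-outgoing collision map ``is Lipschitz with a constant $L(\eps)$ depending only polynomially on $1/V_1=|\log\eps|$''. This is stronger than what the paper obtains and is not justified in your sketch. The paper's estimate~\eqref{eq:ayiest} carries a per-collision factor of order $e^{CR V_1^{-1}}/\eps$, which with $R=\eps^{-1/(3+\gamma)}$ is roughly $\exp\bigl(C\eps^{-1/(3+\gamma)}|\log\eps|\bigr)$---far larger than any power of $|\log\eps|$. The argument nonetheless closes because the exponent $1/(3+\gamma)$ is strictly smaller than $(3/2+\gamma)/(3+\gamma)$, so the tail decay $e^{-CR^{3/2+\gamma}}$ dominates. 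Your discrete Gr\"onwall would go through with this weaker, exponential per-collision factor (and indeed that is what falls out of combining the scattering-time bound $\tau_\star\le CR/V_1$ from Lemma~\ref{lem:angdev} with a $\norm{D^2\phi}_\infty/\eps^2$ Lipschitz rate confined to the scattering intervals); you should state and use that bound rather than assert a polynomial one you cannot prove. Your far-particle estimate (aggregate forcing $\le C\eps^{-3}e^{-CR^{3/2+\gamma}}$ from $N=\eps^{-2}$ particles each contributing $\eps^{-1}\norm{(1-\1_{B_R})\nabla\phi}_\infty$) is correct and matches the paper.
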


We remark that we assume here that both $x^{\eps,R}$ and $x^\eps$ do not interact with the remaining $N-k$ background, and the assumption for $x^\eps$ is not valid for all background particle configurations. Furthermore, we note that the use of the removal of recollisions in Definition~\ref{def:goodtrees} of good dynamics ensures that the short range dynamics have exactly $k$ collisions with the background, and this is used in the proof.

\begin{proof}
Let $(\bar x^\eps,\bar v^\eps)$ be the long range evolution under the background in $\Phi$. The difference between $(x^{\eps,R},v^{\eps,R})$ and $(\bar x^\eps,\bar v^\eps)$ can be estimated by following the proof of \cite[Lemma 2]{Ayi2017}, where one combines the bound on $\tau_\star$ in Lemma~\ref{lem:angdev} with standard Gronwall estimates, which gives 
\begin{equation}\label{eq:ayiest}
|x^{\eps,R}(t)-\bar x^\eps(t)|+|v^{\eps,R}(t)-\bar v^\eps(t)|\leq Ck\frac{e^{CR\,V_1(\eps)^{-1}k}}{\eps^k}\norm{(1-\Lambda^R)\nabla \phi}_\infty.
\end{equation}

One then must estimate the deviation from this intermediary evolution when one involves all $N$ long range scatterers. By letting $z=\bar x^\eps-x^\eps$ and $w=\bar v^\eps-v^\eps$ we have that $(z,w)$ solves
 \begin{equation*}
   \begin{cases}
   \dot z=w\\
   \dot w= \frac{1}{\eps} \sum_{i=1}^k \left(\nabla \phi \left(\frac{x^\eps(t)-x_i}{\eps}\right)-\nabla \phi \left(\frac{\bar x^\eps(t)-x_i}{\eps}\right)\right)+ \frac{1}{\eps}\sum_{i=k+1}^N \nabla \phi \left(\frac{x^\eps(t)-x_i}{\eps}\right)\\
   z(0)=0\\
   w(0)=0.
  \end{cases}
 \end{equation*}
Using the Lipschitz nature of $\nabla\phi$ and the fact that $|x^\eps(t)-x_i|>R\eps$ results in 
 \begin{equation*}
   \begin{cases}
   \frac{\dd}{\dd t}| z|_1=|w|_1\\
   \frac{\dd}{\dd t}|w|_1\leq \frac{1}{\eps} k \frac{|z|_1}{\eps}+ \frac{N-k}{\eps}\norm{(1-\1_{B_R})\nabla \phi}_{L^\infty}.
  \end{cases}
 \end{equation*}
 Separating the variables, and then using the variation of constants formula enables one to write
  \begin{equation*}
  \begin{aligned}
   |z|_1&\leq \frac{(N-k)\norm{(1-\1_{B_R})\nabla \phi}_{L^\infty}}{2\eps\sqrt{Ck/\eps}}\left(\int_0^t e^{ \sqrt{Ck/\eps}(t-s)}- e^{- \sqrt{Ck/\eps}(t-s)}\dd s\right)\\
   |w|_1&\leq \frac{(N-k)}{2\eps}\norm{(1-\1_{B_R})\nabla \phi}_{L^\infty}\left(\int_0^t e^{ \sqrt{Ck/\eps}(t-s)}- e^{- \sqrt{Ck/\eps}(t-s)}\dd s\right)
  \end{aligned}
 \end{equation*}
 and by simplifying, we obtain
 \begin{equation}\label{eq:eggest}
   |\bar x^\eps(t)-x^\eps(t)|+|\bar v^\eps(t)-v^\eps(t)|\leq C\frac{e^{C\sqrt{k/\eps}}N}{ \sqrt{\eps}}\norm{(1-\Lambda^R)\nabla \phi}_{L^\infty}
 \end{equation}
 as required. Combining equations \eqref{eq:ayiest} and \eqref{eq:eggest}  and inputting the asymptotics of the parameters gives
 \begin{equation*}
  |x^{\eps,R}(t)-x^\eps(t)|+|v^{\eps,R}(t)-v^\eps(t)|\leq C\left(|\log\eps|\frac{e^{CR\,|\log \eps|^2}}{\eps^{|\log \eps|}} + \frac{e^{C\sqrt{|\log \eps|/\eps}}}{ \eps^{5/2}}\right) e^{-CR^{3/2+\gamma}} .
\end{equation*}
from which one can easily see that the right hand side is smaller than $b(\eps)$.
\end{proof}

\subsection{Size of Background Leading to Differing Collisional Structures}\label{sec:diffnearcoll}

The preceding section made certain assumptions on the background particles, so that their distribution ensured that, for tree $\Phi \in \cMT$, the random long range evolution encountered the same near collisions as the short range evolution. The purpose of this section is to describe the subset of $\cG(\eps)$ for which these conditions hold with high probability, as well as showing that this subset has probability $1$ in the limit $\eps \to 0$.

We must ensure two events happen. Firstly we must ensure that the long range evolution encounters the same near collisions, and secondly that it encounters the same grazing collisions. For the former, we define the following set.

 \begin{definition}\label{def:nongraz}
  We define the set $\cR(\eps)$ to be those trees $\Phi \in \cG(\eps)$ such that all impact parameter node labels are bounded by
  \begin{equation*}
   0\leq r_i\leq R-\frac{b(\eps)}{\eps}\left(1+\frac{1}{V_1(\eps)}\right)
  \end{equation*}
  where $b(\eps)$ is defined in equation~\eqref{eq:beps}, and $V_1$ in Definition~\ref{geps}.
 \end{definition}
 The purpose and form of this restriction is to ensure that the minimum radius of the two body interaction is smaller than  $R\eps-b(\eps)$, as we now show. This will ensure that the long range evolution does collide with the same background particles.
 
\begin{lemma}\label{lem:minrad}
 Suppose that for spatial scale $\eps>0$ in a binary collision under potential $\phi^R$, the impact parameter $r$ and relative velocity $w$ are bounded by
 \begin{equation*}
0\leq r \leq R-\frac{b(\eps)}{\eps}-\frac{b(\eps)}{\eps|w|},\,\,\,\, |w|\geq \frac{1}{|\log\eps|}.
 \end{equation*}
 Then for $\eps$ sufficiently small the minimum radius is bounded by
 \begin{equation*}
  \rho_\star \leq R\eps-b(\eps).
 \end{equation*}
\end{lemma}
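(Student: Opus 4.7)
The plan is to argue by contradiction, exploiting the rapid decay of $\psi$ at infinity. I work throughout in the rescaled coordinates where $r$ and the minimum radius $\rho_\star$ lie in $[0,R]$, so that the claim $\rho_\star \le R\eps - b(\eps)$ translates to $\rho_\star \le R - b(\eps)/\eps$ in those units. The defining equation for the turning point factorises as
\[
 (\rho_\star - r)(\rho_\star + r) \;=\; \frac{2\psi^{R}(\rho_\star)\,\rho_\star^2}{|w|^2},
\]
and since $r\ge 0$ we have $\rho_\star + r \ge \rho_\star$, while $\psi^{R}\le \psi$ pointwise. These two observations give the clean one-sided estimate
\[
 \rho_\star - r \;\le\; \frac{2\,\psi(\rho_\star)\,\rho_\star}{|w|^2},
\]
which is the only information about the dynamics that the rest of the argument needs.

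Suppose for contradiction that $\rho_\star > R - b(\eps)/\eps$. The hypothesis on $r$ then forces the lower bound $\rho_\star - r > b(\eps)/(\eps|w|)$, while the estimate above, combined with $\rho_\star \le R$ and the monotonicity of $\psi$, gives the matching upper bound
\[
 \rho_\star - r \;\le\; \frac{2R\,\psi\!\left(R - b(\eps)/\eps\right)}{|w|^2}.
\]
Cross-multiplying and using the lower hypothesis $|w|\ge 1/|\log\eps|$ on the left, the assumed inequality reduces to the quantitative requirement
\[
 \frac{b(\eps)}{|\log\eps|} \;\le\; 2R\,\eps\,\psi\!\left(R - \frac{b(\eps)}{\eps}\right),
\]
which it remains to refute for $\eps$ small.

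The main (and only) obstacle is then a rate comparison, and the point is precisely that the exponent $3/2+\gamma$ in the decay assumption \eqref{eq:potdecay} beats $\gamma$. Integrating $-\psi'(\rho)\le Ce^{-C\rho^{3/2+\gamma}}$ gives a bound of the same form on $\psi(\rho)$ itself for large $\rho$, and since $b(\eps)/\eps = o(R)$ one may replace $R - b(\eps)/\eps$ by, say, $R/2$ in the argument at the cost of a harmless constant. Using the scale $R=\eps^{-1/(3+\gamma)}$ fixed in Lemma~\ref{lem:SLdynerr}, the right-hand side of the displayed inequality is dominated by $C\,\eps^{(2+\gamma)/(3+\gamma)}\exp\!\left(-c\,\eps^{-(3/2+\gamma)/(3+\gamma)}\right)$, whereas $b(\eps)/|\log\eps|$ equals $C|\log\eps|^{-1}\exp\!\left(-C\,\eps^{-\gamma/(3+\gamma)}\right)$. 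Because $(3/2+\gamma)/(3+\gamma) > \gamma/(3+\gamma)$, the former decays strictly faster than the latter, so the required inequality fails for all sufficiently small $\eps$, yielding the desired contradiction and hence $\rho_\star \le R - b(\eps)/\eps$.
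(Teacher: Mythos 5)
Your argument is correct, and its core ingredients are identical to the paper's: both proofs start from the turning-point relation (in your rescaled variables, $\rho_\star^2 - r^2 = 2\psi^R(\rho_\star)\rho_\star^2/|w|^2$), both invoke $|w| \ge 1/|\log\eps|$, and both close by observing that with $R = \eps^{-1/(3+\gamma)}$ the potential at scale $R$ decays like $\exp\bigl(-c\,\eps^{-(3/2+\gamma)/(3+\gamma)}\bigr)$, which overwhelms $b(\eps) = C\exp\bigl(-C\,\eps^{-\gamma/(3+\gamma)}\bigr)$.

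What differs is the organisation, and the difference is worth noting. The paper substitutes the squared hypothesis on $r$ directly into the turning-point equation and then argues that the residual
\[
-\frac{2b}{|w|}(R\eps - b) + \frac{b^2}{|w|^2} + \frac{\rho_\star^2\,\psi^R(\rho_\star/\eps)/\eps}{|w|^2}
\]
is non-positive, citing ``the specific form of $b$.'' You instead argue by contradiction: factor $\rho_\star^2 - r^2 = (\rho_\star - r)(\rho_\star + r)$, discard the cheap factor via $\rho_\star + r \ge \rho_\star$, and pit the one-sided bound $\rho_\star - r \le 2R\,\psi(\rho_\star)/|w|^2$ against the lower bound $\rho_\star - r > b/(\eps|w|)$ forced by the contradiction hypothesis and the hypothesis on $r$. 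Your route is in fact slightly tighter in its logic. To justify the paper's displayed inequality one must bound $\psi^R(\rho_\star/\eps)$, and the only a priori bound available for general $\rho_\star$ is $\psi^R(\rho_\star/\eps)/\eps \le |w|^2 \le V_2(\eps)^2$ from the turning-point equation itself, which makes the right-hand side of the paper's inequality blow up rather than vanish; the estimate is only effective once one already knows $\rho_\star$ is near the cut-off radius, so that $\psi^R(\rho_\star/\eps) \le \psi(R - b/\eps)$ is exponentially small. That restriction, $\rho_\star > R\eps - b(\eps)$, is precisely the case your contradiction isolates (the complementary case being vacuous), so you have made explicit a step that the paper's direct computation leaves implicit.
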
 
 \begin{proof}
  The minimum radius satisfies the equation
  \begin{equation*}
   1=\frac{r^2}{\rho^2_\star}+\frac{\frac{1}{\eps}\phi^R\left(\frac{\rho_\star}{\eps}\right)}{|w|^2}
  \end{equation*}
from conservation of angular momentum. Rearranging this, we obtain
  \begin{equation*}
   \rho_\star^2=r^2+\frac{\rho_\star^2\frac{1}{\eps}\phi^R\left(\frac{\rho_\star}{\eps}\right)}{|w|^2},
  \end{equation*}
  and inputting the constraint on $r$ into this equation results in
  \begin{equation*}
   \rho_\star^2=r^2+\frac{\rho_\star^2\frac{1}{\eps}\phi^R\left(\frac{\rho_\star}{\eps}\right)}{|w|^2}\leq (R\eps-b)^2 -\frac{2b}{|w|}(R\eps-b) +\frac{b^2}{|w|^2} +\frac{\rho_\star^2\frac{1}{\eps}\phi^R\left(\frac{\rho_\star}{\eps}\right)}{|w|^2},
  \end{equation*}
and to conclude we must show the final three terms on the right hand side of this are negative. For $\eps$ sufficiently small, we have
\begin{equation*}
 \frac{1}{|\log \eps|} \geq \frac{b^2+\rho_\star^2\frac{1}{\eps}\phi^R\left(\frac{\rho_\star}{\eps}\right)}{2b(R\eps-b)}
\end{equation*}
due to the specific form of $b$. Therefore
\begin{equation*}
 \frac{1}{|w|}\leq \frac{2b(R\eps-b)}{b^2+\rho_\star^2\frac{1}{\eps}\phi^R\left(\frac{\rho_\star}{\eps}\right)}
\end{equation*}
and so 
\begin{equation*}
 \frac{1}{|w|}\left(\left(b^2+\rho_\star^2\frac{1}{\eps}\phi^R\left(\frac{\rho_\star}{\eps}\right)\right) \frac{1}{|w|}-2b(R\eps-b)\right)\leq 0
\end{equation*}
as required.
 \end{proof}
 
This estimate is then used to prove the first aim of this section, that the removal of the impact parameters in the previous definition ensures that the short and long range evolutions exhibit the same collisional structure with high probability. We introduce the  notation here of $\omega=\{x_1,v_1,\dots,x_N,v_N\}$ to be the initial positions and velocities of the background particles. The initial conditions of the $i$th background particle are then denoted by $\omega_{i}$.
 \begin{lemma}
  Suppose that $\Phi\in \cR(\eps)$ with $\eps>0$ sufficiently small. Furthermore suppose that $R= \eps^{-\frac{1}{3+\gamma}}$, then we have
  \begin{multline*}
   \mathbb{P}\Big[x^{\eps,R} \text{ and } x^\eps \text{ have same collisions}\\\Big| \omega_{k+1},\dots,\omega_N,\,s.t.\,\,\forall s\in[0,T], \,\,|x^{\eps,R}-(x_i+sv_i)|>R\eps+2b(\eps)\Big] =1.
  \end{multline*}
 \end{lemma}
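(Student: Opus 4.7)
The plan is to argue that, conditional on the stated event, the claim becomes deterministic (so probability $1$): one shows that the long-range trajectory $x^\eps$ encounters the same sequence of near collisions as the short-range trajectory $x^{\eps,R}$. The key engine is Lemma~\ref{lem:SLdynerr}, which bounds $|x^\eps - x^{\eps,R}| + |v^\eps - v^{\eps,R}| \leq b(\eps)$ whenever both trajectories share their near collisions and avoid the remaining background at distance $R\eps$. The main device for closing the logical loop is a continuity/bootstrap argument on the matching time
\[
T^* := \sup\{t \in [0,T] : x^{\eps,R} \text{ and } x^\eps \text{ undergo the same near collisions on } [0,t]\}.
\]
By property~(3) of good dynamics and the conditioning hypothesis, no collision is in progress at $t=0$ and $T^*>0$; on $[0,T^*)$ the hypotheses of Lemma~\ref{lem:SLdynerr} hold and yield the $b(\eps)$ bound.

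The next step is to assume $T^* < T$ for contradiction. By continuity of both flows, one of two things must happen at $T^*$: either (a) $x^\eps$ develops a new near collision with some background particle $i > k$ not listed in $\Phi$, or (b) $x^\eps$ fails to match a near collision that $x^{\eps,R}$ begins with some $i \leq k$. Case~(a) is ruled out by the conditioning: one has $|x^{\eps,R}(T^*) - (x_i + T^* v_i)| > R\eps + 2b(\eps)$ and by the triangle inequality together with the bootstrap bound, $|x^\eps(T^*) - (x_i + T^* v_i)| > R\eps + b(\eps) > R\eps$, contradicting the supposed near collision. Case~(b) is ruled out by combining the impact-parameter restriction in Definition~\ref{def:nongraz} with the velocity separation $|v^{\eps,R}(t_i^-) - v_i| \geq V_1(\eps)$ from good dynamics: this places us exactly in the regime of Lemma~\ref{lem:minrad}, giving a minimum approach $\rho_\star \leq R\eps - b(\eps)$ in the short-range scattering, whence the bootstrap bound forces $x^\eps$ to come within $R\eps$ of $x_i + tv_i$ as well, producing the matching near collision. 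Finally, the time separation $\delta(\eps) = \sqrt{\eps}$ in Definition~\ref{geps}(4) and the scattering-time estimate $\tau_\star \leq C R/V_1(\eps)$ from Lemma~\ref{lem:angdev} together ensure that collision intervals remain disjoint, so the matching preserves both identity and order. Both branches contradict $T^* < T$, forcing $T^* = T$.

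The main obstacle is the apparent circularity: the $b(\eps)$ bound of Lemma~\ref{lem:SLdynerr} already presupposes matching near collisions, which is precisely what we want to establish. The bootstrap argument breaks the circularity, but its closure depends delicately on three safety margins all pointing the same way: the extra $2b(\eps)$ buffer in the conditioning (to absorb the long-range drift), the subtraction of $b(\eps)/\eps\,(1+1/V_1(\eps))$ in the definition of $\cR(\eps)$ (to guarantee $\rho_\star \leq R\eps - b(\eps)$ via Lemma~\ref{lem:minrad}), and the asymptotic ordering $b(\eps) \ll R\eps \ll \delta(\eps) V_1(\eps)$ enabled by the choice $R = \eps^{-1/(3+\gamma)}$ together with the exponential decay \eqref{eq:potdecay}. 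Verifying that these margins simultaneously close under $\eps \to 0$ is the delicate calculation; once it does, the deterministic nature of the argument under the conditioning converts immediately into the probability-$1$ conclusion.
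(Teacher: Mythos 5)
Your bootstrap on the matching time $T^*$ is the continuous-time phrasing of exactly the argument the paper gives by induction on the number of collisions already encountered: both rely on the same three ingredients, namely the $b(\eps)$ error bound of Lemma~\ref{lem:SLdynerr}, the conditioning buffer $R\eps + 2b(\eps)$ to stop $x^\eps$ from picking up spurious near collisions, and the $\cR(\eps)$ impact-parameter restriction fed into Lemma~\ref{lem:minrad} to guarantee the matching near collision with $\rho_\star \le R\eps - b(\eps)$. The proposal is correct and takes essentially the same approach as the paper.
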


 \begin{proof}
 We aim to show that by restricting the impact parameters using the set $\cR(\eps)$ we ensure that the evolutions $x^{\eps,R}$ and $x^\eps$ encounter the same background. We prove by induction on the number of collisions already encountered.
 
 If one has encountered no collisions, then under the constraint that the background particles are at least $R\eps+2b(\eps)$ from $x^{\eps,R}$, by integrating the equations~\eqref{eq:partdens} we have
 \begin{equation*}
  |x^{\eps,R}(t)-x^\eps(t)|\leq N\,t\,\norm{(1-\Lambda^R)\nabla\phi}_{L^\infty}\le b(\eps)
 \end{equation*}
 and so the long range evolution does not encounter a near collision with any of the $N-n(\Phi)$ background particles not described in the tree $\Phi$.
 
 Now suppose that the short range evolution collides at time $t_1$. Again by Lemma~\ref{lem:SLdynerr}, we know that 
 \begin{equation*}
  |x^{\eps,R}(t)-x^\eps(t)|\leq b(\eps)
 \end{equation*}
 and we must ensure that the long range tagged particle also encounters a collision with this background. Since $\Phi \in \cR(\eps)$, the impact parameter of the collision is thus smaller than $R-b(\eps)(1+1/V_1(\eps))/\eps$ and so by an application of Lemma~\ref{lem:minrad}, we know that the minimum radius of the collision is smaller than $R\eps-b(\eps)$ thus ensuring the long range evolution has a near collision with this background particle.
 
 This then concludes the base case of the inductive argument. 
 The remainder of the argument is identical to the base case. We use Lemma~\ref{lem:SLdynerr} to estimate the error between the long and short range evolutions, before using Lemma~\ref{lem:minrad} to ensure that the long range evolution encounters the same near collision.
 \end{proof}
  
  It should be clear that the conditioning on the background particles in the previous lemma has probability $0$ in the limit $\eps\to 0$. Indeed, the conditioning forces 
  \begin{equation*}
   \inf_{t\in [0,T]}|x^{\eps,R}-x_s|\nin[R\eps-b(\eps)(1+1/V_1(\eps)),\,R\eps+2b(\eps)],
  \end{equation*}
  for all time $t\in [0,T]$. This then forces the initial positions and velocities of the background particles to lie outside a cylinder of size $\left(CT\,V_2(\eps)\,b(\eps))^2\right)^{N-n(\Phi)}$, which we observe tends to $0$ as $\eps\to 0$. 
    
To conclude the section, we are left to show that the restriction of $\cR(\eps)$ has small measure.
 \begin{lemma}\label{lem:measureR}
 For  $\phi$ an admissible long range potential with decay as in \eqref{eq:potdecay}, recall the sets $\cG(\eps)$ and $\cR(\eps)$ in Definitions~\ref{geps} and \ref{def:nongraz} respectively. Furthermore, suppose that $R(\eps)=\eps^{-1/(3+\gamma)}$. Then for $f^{\eps,R}$ the short range tagged particle density on $\cMT$, we have
  \begin{equation*}
   f^{\eps,R}_t(\cG(\eps)\setminus \cR(\eps))\to 0
  \end{equation*}
as $\eps \to 0$
 \end{lemma}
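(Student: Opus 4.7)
The plan is to reduce $f^{\eps,R}_t(\cG(\eps)\setminus\cR(\eps))$ to a compensator bound for the counting process that records those collisions of the tagged-particle Markov jump process whose impact parameter lies in the thin shell
$$A(\eps):=\bigl[R-\beta(\eps),\,R\bigr],\qquad \beta(\eps):=\frac{b(\eps)}{\eps}\Bigl(1+\frac{1}{V_1(\eps)}\Bigr).$$
By definition, $\Phi\in\cG(\eps)\setminus\cR(\eps)$ iff $\Phi\in\cG(\eps)$ and $r_i(\Phi)\in A(\eps)$ for some $i\in\{1,\dots,n(\Phi)\}$. Since $b(\eps)=Ce^{-C\eps^{-\gamma/(3+\gamma)}}$ decays faster than any polynomial while $V_1(\eps)^{-1}=|\log\eps|$ grows only polylogarithmically, $\beta(\eps)$ decays faster than any polynomial in $\eps$.

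First I will transfer the estimate from $f^{\eps,R}_t$ to the idealized density $P^R_t$. By Lemma~\ref{srdensity},
$$f^{\eps,R}_t\bigl(\cG(\eps)\setminus\cR(\eps)\bigr)\le P^R_t\bigl(\cG(\eps)\setminus\cR(\eps)\bigr)+\norm{f^{\eps,R}_t-P^R_t}_{TV},$$
and since $\norm{f^{\eps,R}_t-P^R_t}_{TV}\to 0$, it suffices to control the first term.

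Next, via Lemma~\ref{lem:BEontree}, $P^R$ is the joint law of the Markov jump process $(x^R(s),v^R(s))$ together with its jump parameters, with jump kernel $r\,g(v_\star)|v-v_\star|\dd r\dd\zeta\dd v_\star$ on $[0,R]\times[0,2\pi]\times\R^3$. Writing $N^{\mathrm{bad}}_t:=\#\{i:t_i\le t,\,r_i\in A(\eps)\}$, I have $\cG(\eps)\setminus\cR(\eps)\subset\{N^{\mathrm{bad}}_t\ge1\}$, so Markov's inequality gives
$$P^R_t\bigl(\cG(\eps)\setminus\cR(\eps)\bigr)\le\mathbb{E}_{P^R}[N^{\mathrm{bad}}_t]=\mathbb{E}_{P^R}\Bigl[\int_0^t\Lambda^{\mathrm{bad}}\bigl(v^R(s)\bigr)\dd s\Bigr],$$
where the second equality is the compensator identity for the counting process of marked jumps, with
$$\Lambda^{\mathrm{bad}}(v):=2\pi\int_{R-\beta(\eps)}^{R}r\,\dd r\int_{\R^3}g(v_\star)|v-v_\star|\dd v_\star\le CR\beta(\eps)(1+|v|)$$
by the moment condition~\eqref{eq:bckgrnd} on $g$. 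This identity can alternatively be obtained by a direct Fubini-style integration of the recursive formula for $P^R_t$ from the proof of Lemma~\ref{lem:BEontree}, using that the integral of $P^R_t$ over all continuations of a fixed partial history equals the marginal density of that partial history.

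Finally, Proposition~\ref{prop:SRexist} gives $\int(1+|v|^2)f^R(s,x,v)\dd x\dd v\le C_T$ uniformly in $s\in[0,T]$, so Jensen's inequality and Lemma~\ref{lem:BEontree} yield $\mathbb{E}_{P^R}[|v^R(s)|]\le C_T$. Hence $P^R_t(\cG(\eps)\setminus\cR(\eps))\le CTR\beta(\eps)$, and substituting $R=\eps^{-1/(3+\gamma)}$, $\beta(\eps)\le Cb(\eps)(1+|\log\eps|)/\eps$, and $b(\eps)=Ce^{-C\eps^{-\gamma/(3+\gamma)}}$ gives
$$R\beta(\eps)\le C(1+|\log\eps|)\,\eps^{-(4+\gamma)/(3+\gamma)}\,e^{-C\eps^{-\gamma/(3+\gamma)}}\to 0,$$
closing the argument. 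The main obstacle will be making the compensator identity of Step~3 rigorous in the marked-tree formalism, either by invoking the general theory of marked point processes for the jump process of Lemma~\ref{lem:BEontree} or by an elementary but notationally tedious iterative peel-off of the $i$-th jump from the recursive formula for $P^R_t$.
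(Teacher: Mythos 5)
Your proposal is correct in outline but follows a genuinely different route from the paper. The paper works entirely on the side of $f^{\eps,R}$ and $\lambda$ (Lebesgue measure on $\cMT$): it bounds $\lambda(\cG(\eps)\setminus\cR(\eps))$ directly by summing over the number $k$ of nodes, using that at least one impact parameter is confined to the thin shell $[R-\beta(\eps),R]$ of width $\beta(\eps)=b(\eps)(1+1/V_1(\eps))/\eps$ while the remaining constraints of $\cG(\eps)$ keep all other labels in compact sets, and then invokes the $L^\infty$ bound $f_t^{\eps,R}\le(4RV_2(\eps))^{M(\eps)}$ established after Lemma~\ref{lem:partontree}. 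The Lebesgue volume of the bad set decays like $b(\eps)$ up to polylogarithmic and polynomial-in-$R$ factors, which is overwhelmed by the super-polynomial decay of $b(\eps)$; the $L^\infty$ factor grows only like $e^{C|\log\eps|^2}$, so the product still vanishes. You instead transfer the estimate to $P^R_t$ through the total-variation convergence of Lemma~\ref{srdensity}, then exploit the Markov jump-process interpretation of $P^R$ and a first-moment/compensator bound for the counting process of grazing jumps, using the $R$-uniform second-moment bound on $f^R$. What you gain is a transparent, quantitative and structurally probabilistic argument; what you lose is self-containedness: you use Lemma~\ref{srdensity} in the regime $R=\eps^{-1/(3+\gamma)}$, a strengthening which the paper only asserts inside the proof of Theorem~\ref{thm:main}, and you also lean on a uniform-in-$R$ bound $\int(1+|v|^2)f^R\le C$ that is not stated in Proposition~\ref{prop:SRexist} itself (it appears only inside the proof of Lemma~\ref{lem:BEcomp}). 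The paper's proof avoids both dependencies and so is logically lighter, even though your quantitative bound $P^R_t(\cG(\eps)\setminus\cR(\eps))\lesssim TR\beta(\eps)$ is sharper.

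Two points you should tighten if you keep this route. First, you flag that the compensator identity needs to be made rigorous in the marked-tree formalism; the cleanest way is the ``elementary peel-off'' you mention, i.e.\ integrate the recursive formula for $P^R_t$ from the proof of Lemma~\ref{lem:BEontree} over all continuations of a fixed partial history and sum over the index of the first bad jump — this is a finite Fubini computation and avoids appeal to general marked point process theory. Second, make explicit that the TV step and the moment step are both needed with $R$ a function of $\eps$, and cite the place in the paper where the $\eps$-dependent extension of Lemma~\ref{srdensity} is asserted, so the reader does not suspect a circularity with Section~\ref{sec:partdyn} (there is none: Lemma~\ref{srdensity} lives entirely in Section~\ref{sec:MT}).
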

\begin{proof}
 We start by observing that, where $\lambda$ is the  Lebesgue measure on $\cMT$, we have for $\eps$ sufficiently small
 \begin{equation*}
 \begin{aligned}
  \lambda(\cG(\eps)\setminus &\cR(\eps)) \le V_2(\eps)\sum_{k=1}^{M(\eps)} \Bigg(T\,V_2(\eps)\,b(\eps)\left(1+\frac{1}{V_1(\eps)}\right)\Bigg)^k \\
  &\leq C\, b(\eps)(1+|\log\eps|^7)  \sum_{k=0}^\infty \left(b(\eps)(1+|\log \eps|^4)\right)^k
 \end{aligned}
 \end{equation*}
and since $b(\eps)= Ce^{-C(1/\eps)^{\gamma/(3+\gamma)}}$, the sum is finite, and the multiplying factor tends to $0$ as $\eps\to 0$.

Therefore, the set of trees we remove in $\cR(\eps)$ is measure $0$ in the limit. Since $f^{\eps,R}_t$  is absolutely continuous with respect to the Lebesgue measure, we also have 
\begin{equation*}
 f_t^{\eps,R}(\cG(\eps)\setminus \cR(\eps)) \to 0
\end{equation*}
as $\eps \to 0$ as required.
 \end{proof}
 
 \subsection{Weak Convergence of Particle Densities}

We now utilise these estimates in order to show that $f^{\eps,R}-f^\eps \to 0$ as $\eps\to 0$.
We aim to exploit the structure of the dynamics, namely that the long and short range dynamics are comparable where they encounter the same near collisions, and that this structure is displayed on a set of evolutions with probability one in the limit $\eps \to 0$.

\begin{lemma}\label{lem:wkconvpart}
Let $\phi$ be an admissible potential with decay as in equation~\eqref{eq:potdecay}. Let  $R=\eps^{-1/(3+\gamma)}$, and let $h\in C_b(\cU)$. Then for $f^\eps$ the phase space density of the tagged particle under equations~\eqref{eq:partdens}, and $f^{\eps,R}$ the tagged particle density for short range evolution on $\cMT$ given in Section~\ref{sec:MT}, we have
 \begin{equation*}
 \int_\cU h(x,v) \,f^{\eps}_t(x,v)\dd x \dd v-\int_{ \cMT}h(\Phi)\, f_t^{\eps,R}(\Phi)\dd \Phi\to 0
 \end{equation*}
 as $\eps \to 0$.
 \end{lemma}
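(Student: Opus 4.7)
The plan is to exhibit a coupling between the true long range particle system producing $f^\eps$ and the tree-indexed short range system producing $f^{\eps,R}$, on the same probability space of initial conditions $(x_0,v_0,x_1,v_1,\dots,x_N,v_N)$. On a set of asymptotically full probability the two tagged trajectories will differ by at most $b(\eps)$, and on the complementary bad event the integrand is controlled by $2\|h\|_\infty$. Throughout, $h(\Phi)$ on the tree side is interpreted as $h(x^{\eps,R}(t,\Phi),v^{\eps,R}(t,\Phi))$, so both integrals equal expectations of $h$ of the tagged configuration, one under long range dynamics and one under short range.

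First I would truncate both integrals to the good tree set $\cR(\eps)$. By Lemma~\ref{srdensity} and Lemma~\ref{lem:measureR} we have $f^{\eps,R}_t(\cMT\setminus\cR(\eps))\to 0$, and this also controls the long range side once the coupling is in place, so the first step is purely a tree-side reduction using these two lemmas. Simultaneously, on $\cR(\eps)\subset\cG(\eps)$ all relevant velocities are bounded by $V_2(\eps)=|\log\eps|$, so the configurations $(x^\eps(t),v^\eps(t))$ and $(x^{\eps,R}(t),v^{\eps,R}(t))$ to be compared lie in the compact set $K_\eps=\bT^3\times\overline{B_{V_2(\eps)+1}}$; the modulus of continuity $\omega_h$ of $h$ on $K_\eps$ satisfies $\omega_h(b(\eps))\to 0$ because $V_2(\eps)$ grows only logarithmically while $b(\eps)$ decays like $e^{-C(1/\eps)^{\gamma/(3+\gamma)}}$.

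Next, given $(x_0,v_0)$ and the background, the short range trajectory $x^{\eps,R}$ deterministically records the geometric data of its near collisions and thus produces a tree $\Phi\in\cMT$; the law of $(\Phi,t)$ under the initial distribution is exactly $f^{\eps,R}_t$. Define the coupling event
\begin{equation*}
E_\eps=\Big\{\Phi\in\cR(\eps)\Big\}\cap\Big\{\forall s\in[0,T],\,\forall i\notin\Phi,\ |x^{\eps,R}(s)-(x_i+sv_i)|>R\eps+2b(\eps)\Big\}.
\end{equation*}
On $E_\eps$ the preceding lemma guarantees that the long range trajectory $x^\eps$ has the same near collisions as $x^{\eps,R}$, so Lemma~\ref{lem:SLdynerr} applies and gives $|x^\eps(t)-x^{\eps,R}(t)|+|v^\eps(t)-v^{\eps,R}(t)|\le b(\eps)$. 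The probability of the second half of $E_\eps$ tends to one by the cylinder computation that follows Lemma~\ref{lem:minrad} (the excluded set of initial data for each of the $N-n(\Phi)$ background particles has volume $O(V_2(\eps)^2\,b(\eps)^2)$, and $N=\eps^{-2}$, so the product stays close to $1$ after using $M(\eps)=|\log\eps|$), and the first half has probability $\to 1$ by the truncation step. Writing
\begin{equation*}
\int_\cU h\,f^\eps_t\dd x\dd v-\int_{\cMT}h\,f^{\eps,R}_t\dd\Phi=\mathbb{E}\!\left[\bigl(h(x^\eps(t),v^\eps(t))-h(x^{\eps,R}(t),v^{\eps,R}(t))\bigr)\,\mathbf{1}_{E_\eps}\right]+\mathbb{E}\!\left[(\cdots)\,\mathbf{1}_{E_\eps^c}\right],
\end{equation*}
the first expectation is bounded by $\omega_h(b(\eps))\to 0$ and the second by $2\|h\|_\infty\,\mathbb{P}(E_\eps^c)\to 0$.

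The main obstacle is the quantitative control of the second half of $E_\eps$: one must rule out that any of the $N-n(\Phi)$ untracked background particles enters the annular cylinder around $x^{\eps,R}$ of radial width $\sim b(\eps)$, which uses the key scaling identity $N\eps^2=1$ together with $b(\eps)\ll R\eps$ — precisely the inequality that forced the specific choice $R=\eps^{-1/(3+\gamma)}$ in Lemma~\ref{lem:SLdynerr}. A secondary subtlety is that $h\in C_b(\cU)$ is only uniformly continuous on compact sets, but this is handled by the $V_2(\eps)$-truncation already built into $\cG(\eps)$ together with the rapid decay of $b(\eps)$, which dominates any polynomial blow-up of $\omega_h$ as $K_\eps$ grows.
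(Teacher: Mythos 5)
Your high-level strategy---couple the long-range and short-range systems on the same probability space of initial configurations, restrict to $\cR(\eps)$ so that Lemma~\ref{lem:SLdynerr} applies, and handle the complementary bad event by the probability estimate coming from the excluded cylinders---matches the paper's proof closely. The paper also decomposes into the $\cG(\eps)\setminus\cR(\eps)$ error (killed by Lemma~\ref{lem:measureR}), the measure of the set $A$ of background initial data that come within an annulus of width $\sim b(\eps)$ of $x^{\eps,R}$ (whose complement has vanishing probability), and a main term on the good event. So the skeleton is the same.

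Where you genuinely diverge is in the estimate of the main term on the good event. The paper does \emph{not} invoke a modulus of continuity of $h$ on the growing compact sets $K_\eps=\bT^3\times\overline{B_{V_2(\eps)+1}}$; instead it reduces to indicator test functions $h=\1_\Omega$ via a monotone/Fatou approximation, then exploits the $L^\infty$ bounds on $f^\eps$ (coming from $\|f_0\|_{L^\infty}$, since the flow is measure preserving) and on $f^{\eps,R}$ (via equation~\eqref{eq:srpart}), together with the observation that the discrepancy between $\int_\Omega f^\eps$ and $\int_{S_t^{\eps,R}(\Omega)}f^{\eps,R}$ is supported in the thin shell $\Omega_b\setminus\Omega$, whose Lebesgue measure is $O(b(\eps)\,\mathrm{diam}(\Omega)^2)$. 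This is deliberately a volume argument, not a continuity argument.

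This distinction matters, and it exposes a gap in your proposal. The claim that $\omega_{h,K_\eps}(b(\eps))\to 0$ ``because $V_2(\eps)$ grows only logarithmically while $b(\eps)$ decays like $e^{-C(1/\eps)^{\gamma/(3+\gamma)}}$'' is not justified for a general $h\in C_b(\cU)$: the domain $\cU=\bT^3\times\R^3$ is not compact, so $h$ need not be uniformly continuous, and there is no a priori control on how the modulus of continuity of $h$ on $\overline{B_{V}}$ degenerates as $V\to\infty$. For instance, a function such as $h(x,v)=\cos\!\left(\exp\exp\exp(|v|^2)\right)$ is bounded and continuous, but its modulus of continuity on $\overline{B_V}$ is essentially $2$ unless $\delta$ is triple-exponentially small in $V$; with $V=V_2(\eps)=|\log\eps|$ that threshold is far below $b(\eps)$, so $\omega_{h,K_\eps}(b(\eps))\not\to 0$. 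The statement ``the rapid decay of $b(\eps)$ dominates any polynomial blow-up of $\omega_h$'' is true but irrelevant: the blow-up is not polynomial in general. Your argument is correct as written if one strengthens the hypothesis to $h$ uniformly continuous (or Lipschitz, or compactly supported), but as stated it does not cover $h\in C_b(\cU)$. The paper's indicator-function reduction sidesteps this entirely, at the cost of needing the $L^\infty$ bounds on the densities---which is why the paper carries those bounds around. If you want to rescue the modulus-of-continuity route, you would need either to restrict the class of test functions (which, in view of the weak-$\star$ $L^\infty$ convergence targeted in Theorem~\ref{thm:main}, is probably acceptable but should be stated), or to insert an intermediate truncation of $h$ in velocity and control the tail using the moment bound $\int(1+|v|^2)f^\eps<\infty$ rather than trying to beat an arbitrary modulus with $b(\eps)$.
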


\begin{proof}
To apply the previous lemmas, we first describe the set of background we remove to ensure that the long range evolution has the same collisions as the short range evolution. Recalling the notation $\omega$ for the initial positions and velocities of the background particles, we define the set $A$ by
\begin{equation*}
A=\left\{\omega : \inf_{t\in [0,T]}|x^{\eps,R}(t)-x_s(t)|\nin [R\eps-b(\eps)(1+1/V_1(\eps)),\,R\eps+2b(\eps)]\right\}.
 \end{equation*}
 We then split into the following
\begin{equation*}
\begin{aligned}
   \int_\cU h \,f^{\eps}_t\dd x \dd v-\int_{ \cMT}h\, f_t^{\eps,R}\dd \Phi&\le\int_\cU h \,f^{\eps}_t\left(1-\mathbb{P}[A]\right)\dd x \dd v \\&\hspace{1cm}+\int_\cU h \,f^{\eps}_t\,\mathbb{P}[A]\dd x \dd v-\int_{ \cR(\eps)}h\, f_t^{\eps,R}\dd \Phi\\&\hspace{5cm}-\int_{ \cG(\eps)\setminus \cR(\eps)}h\, f_t^{\eps,R}\dd \Phi.
\end{aligned}
\end{equation*}
 The final term of this expression tends to zero by an application of Lemma~\ref{lem:measureR}. We now treat the first term.
 
Estimating the probability of the set $A$ by estimating the size of the cylinder one must remove for each background particle to be outside $A$, we have
 \begin{equation*}
 \begin{aligned}
 \mathbb{P}[A^{C}]&\le
   CV_2(\eps)^{M(\eps)}\left((R\eps+2b(\eps))^2-(R\eps-b(\eps)(1+1/V_1(\eps)))^2\right)^{M(\eps)}\\
   &= CV_2(\eps)^{M(\eps)}b(\eps)^{2M(\eps)}.
\end{aligned}
   \end{equation*}
This then tends to $0$ as $\eps\to 0$ which then means that, since $h\,f^\eps$ is bounded, the first term tends to $0$ as $\eps \to 0$.

We finally analyse the middle expression. We claim that if this difference tends to $0$ for $h$ an indicator function, then we can conclude. Approximating a positive $h\in C_b$ by a sum of indicator functions, we can use Fatou's lemma to deduce the convergence of the densities tested against this $h$. Finally, for an arbitrary $h\in C_b$, we split into positive and negative parts and then approximate each with a sum of indicator functions, and then we can deduce weak convergence.

We thus assume for the remainder of the proof that $h=\1_\Omega$. Observe that, by Lemma~\ref{lem:SLdynerr}, the evolution $x^{\eps,R}$ for tree $\Phi\in \cG(\eps)$ and the evolution $x^\eps$  ending at $(x,v)$ with $N$ background particles lie within $b(\eps)$ of each other. This then gives an estimate on the spread of the supports of the probabilities, and so
\begin{equation*}
 \int_{\cR(\eps)\cap S_t^{\eps,R}(\Omega)}f_t^{\eps,R}(\Phi)\dd \Phi\leq \int_{\Omega_b}f^\eps(t,x,v)\,\mathbb{P}[A]\dd x \dd v
\end{equation*}
and 
\begin{equation*}
\int_\Omega f^\eps(t,x,v)\,\mathbb{P}[A]\dd x \dd v \leq \int_{\cR(\eps)\cap S^{\eps,R}_t (\Omega_b)}f^{\eps,R}_t(\Phi)\dd \Phi
\end{equation*}
where 
\begin{equation*}
 \Omega_b=\{(x,v)\in \cU : \,\, \exists\, (y,w)\in \Omega \text{ such that } |x-y|<b, \, |v-w|<b\}
\end{equation*}
is the set of points within $b$ of the set $\Omega$. 

We therefore obtain
\begin{multline}\label{eq:estimate}
\left|\int_\Omega f^\eps(t,x,v)\,\mathbb{P}[A]\dd x \dd v- \int_{\cR(\eps) \cap S^{\eps,R}_t(\Omega)}f_t^{\eps,R}(\Phi) \dd \Phi\right| \\\leq \max\left\{ \int_{\cR(\eps)\cap S^{\eps,R}_t(\Omega_b\setminus \Omega)}f_t^{\eps,R}(\Phi) \dd \Phi,\,\int_{\Omega_b\setminus \Omega}f^\eps(t,x,v)\dd x \dd v\right\}
\end{multline}
and we show that both the terms on the right hand side tend to $0$ as $\eps\to 0$.

Using the evolution equation for $f_t^{\eps,R}$ in equation~\eqref{eq:srpart} to provide an $L^\infty$ bound on $f^{\eps,R}$, we obtain
\begin{equation*}
  \int_{\cR(\eps) \cap S^{\eps,R}_t(\Omega_b\setminus \Omega)}f_t^{\eps,R}(\Phi) \dd \Phi
  \le (4R\,V_2(\eps))^{M(\eps)}\sum_{k=0}^{M(\eps)}  \lambda(S^{\eps,R}_t(\Omega_b\setminus \Omega)\cap \cMT_k)
\end{equation*}
and we calculate the size of these sets. The velocity constraint in $S^{\eps,R}_t(\Omega_b\setminus \Omega)$ enforces the initial velocity of the tagged particle to lie in a region of size at most $
\mathrm{diam}(\Omega)^{2}\,b $,
and the impact parameters and velocities lie in sets of size at most $R\,V_2(\eps)$. The time labels  lie in $[0,T]$ and so we obtain
\begin{equation*}
 \lambda(S^{\eps,R}_t(\Omega_b\setminus \Omega)\cap \cMT_{k})\leq C^k\,R^{k}\,V_2(\eps)^{k+2}\,b(\eps),
 \end{equation*}
 and using the summation formula for a geometric series results in

 \begin{equation*}
  \int_{S^{\eps,R}_t(\Omega_b\setminus \Omega)}f_t^{\eps,R}(\Phi) \dd \Phi\leq C\,V_2(\eps)^2\,b(\eps) \frac{\left(C\,R\,V_2(\eps)\right)^{2M(\eps)+1}-1}{C\,R\,V_2(\eps)-1},
 \end{equation*}
 and the exponential decay of $b(\eps)$ as $\eps \to 0$ ensures that this tends to $0$.
 
For the other term in \eqref{eq:estimate} we first must show that $f^\eps$ is in $L^\infty$. With $T^{-t}_{N}$ the solution operator for \eqref{eq:partdens} with $N$ background particles at $(x_i , \,v_i)$, we have
\begin{equation*}
  f^{\eps}(t,x,v)=  \int \prod_{i=1}^N g(v_i) f_0(T_N^{-t}(x,v)) \dd v_1\dots \dd v_N
  \leq \norm{f_0}_{L^\infty}
\end{equation*}
and since we assume $f_0\in L^\infty$, by taking the supremum over $x,v$ we have $f^\eps \in L^\infty$. We then estimate
\begin{equation*}
  \int_{\Omega_b\setminus \Omega}f^\eps(t,x,v)\dd x \dd v\leq \norm{f^\eps}_{L^\infty} \int_{\Omega_b\setminus \Omega}\dd x \dd v
  \leq C\norm{f_0}_{L^\infty} \,b(\eps)\,C \mathrm{diam}(\Omega)^{2},
\end{equation*}
and since $b\to 0$ exponentially, this term tends to $0$ as $\eps \to 0$, which concludes convergence.
\end{proof}
 
 \section{Comparison of Short Range and Long Range Boltzmann Equations}\label{sec:BEcomp}

 To conclude the proof of Theorem~\ref{thm:main}, we compare weak solutions of the linear Boltzmann equation for the potentials $\phi$ and $\phi^{R}$. We aim to show that $f^R\to f$, and to then conclude Theorem~\ref{thm:main}. 
 
 Recall that weak solutions for potential $\phi$ satisfy \eqref{eq:weaksol}, and that a weak solution of the linear Boltzmann equation for $\phi^{R}$ satisfies 
   \begin{equation}\label{eq:srweak}
 -\int_0^T\int_\cU  \left(\pd_t h +v\cdot \nabla_x h\right)\,f^R\dd x \dd v \dd t-\int_{\cU} f_0 \,h(0) \dd x \dd v= \int_0^T\langle  L^{R}(f^R) ,\,h\rangle \dd t
 \end{equation}
where
 \begin{equation*}
\langle L^{R}(f) ,\,h\rangle=\int_\cU\int_{\R^3}\int_{B_R(0)}(h(v^{\prime,R})-h(v))\,g_\star\,f\,|v_\star-v|\dd S\dd v_\star \dd x \dd v.
 \end{equation*}
 
Before concluding convergence of $f^R$ to $f$, we first compare the collision operators. It is necessary here to recall Lemma~\ref{lem:angdev}. From this, we have the following.

\begin{lemma}\label{lem:collest}
 Let $R>0$ and suppose that $\phi$ is an admissible long range potential such that there is a $\rho_2>0$ and $s>2$ such that for $\rho>\rho_2$ we have $\psi(\rho)\leq \rho^{-s}$.
 
Then for all $f$ with $\int_\cU(1+|v|^2)\,f\dd x\dd v<\infty$ and for all $h \in C_0^\infty([0,T)\times \cU)$ we have 
 \begin{equation*}
  |\langle L^R(f),h\rangle -\langle L(f),h\rangle|\leq C_1(R)\,\norm{\nabla_v h}_{L^\infty}\int_\cU(1+|v|^2)\,f\dd x \dd v
 \end{equation*}
 where $C_1(R)=o(1)$ as $R\to \infty$. Furthermore, we also have
 \begin{equation*}
  |\langle L^R(f),h\rangle \rangle|\leq C_2\,\norm{\nabla_v h}_{L^\infty}\int_\cU(1+|v|^2)\,f\dd x \dd v
 \end{equation*}
 where $C_2 $ is independent of $R$.
\end{lemma}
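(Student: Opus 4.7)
The plan is to reduce both estimates to pointwise control of the deviation angle via the Lipschitz property of $h$. From the scattering formula \eqref{eq:scatop} I obtain the elementary inequalities
\[
|h(v') - h(v^{\prime,R})| \leq C\,\norm{\nabla_v h}_{L^\infty}\, |w|\,|\theta(r,w) - \theta^R(r,w)|, \qquad |h(v') - h(v)| \leq C\,\norm{\nabla_v h}_{L^\infty}\, |w|\, \theta(r,w),
\]
with $w = v_\star - v$, since $|v' - v| = |w\cdot\nu| = |w|\,|\sin(\theta/2)|$ and the difference of the unit vectors $\nu,\nu^R$ is controlled by $|\theta-\theta^R|$. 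Combined with the factor $|v_\star - v|$ present in the operators, every term in the integrand carries a factor $|w|^2$.

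For the first inequality, I would split the difference as $I_{\text{bulk}} + I_{\text{tail}}$, where $I_{\text{bulk}}$ integrates $(h(v')-h(v^{\prime,R}))$ over $r\in[0,R]$ and $I_{\text{tail}}$ integrates $(h(v')-h(v))$ over $r\in[R,\infty)$ (contributing only to $\langle L(f),h\rangle$). For $I_{\text{tail}}$, the bound $\theta(r,w)\leq C/(1+|w|^2 r^s)$ used in the proof of Lemma~\ref{lem:angdev} combined with $s>2$ yields, after splitting the $r$-integral at $r_{\text{th}} = |w|^{-2/s}$, the uniform estimate $|w|^2\int_R^\infty r\,\theta\,dr \leq C\,R^{2-s}$. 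For $I_{\text{bulk}}$, I split the $r$-integral at $R-1-1/|w|$ and apply the two cases of Lemma~\ref{lem:angdev} with $\eta=|w|$: on the inner region the factor $1/|w|^2$ from the angle bound cancels the external $|w|^2$, so the $r$-integral of $\kappa(r,R)$ contributes $o(1)$ as $R\to\infty$, independently of $|w|$; on the boundary layer of width $1+1/|w|$ near $r=R$, the bound $C/(1+|w|^2 r^s)$ yields $O(R^{1-s} + R^{1-s}/|w|)$.

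Integration in $v_\star$ against $g_\star$ is then handled using the moment bounds \eqref{eq:bckgrnd}: terms without inverse powers of $|w|$ integrate to $O(1+|v|^2)$ via $|w|^\alpha \leq C(1+|v|^\alpha+|v_\star|^\alpha)$, while the $R^{1-s}/|w|$ term is absorbed by $\int g(v_\star)/|v-v_\star|\,dv_\star \leq C(\norm{g}_{L^1}+\norm{g}_{L^\infty})$, which is finite uniformly in $v$ since $g\in L^1\cap L^\infty$. Integration in $x,v$ against $f$ then yields the first estimate with $C_1(R) = O(R^{1-s}) + o(1) \to 0$ as $R\to\infty$. The second estimate follows by applying the same programme to $L^R$ alone using $\theta^R\leq\theta$: splitting $[0,R]$ at $\rho_2$ and then at $r_{\text{th}}$, the bound $|w|^2\int_0^R r\,\theta^R\,dr \leq C(1+|w|^{2-4/s}+|w|^2)$ is uniform in $R$ for $s>2$, and the moment bounds on $g$ convert this into a constant independent of $R$ after integration in $v_\star$. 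The main obstacle will be the bookkeeping in the boundary-layer region $(R-1-1/|w|,R)$, where small relative velocities $|w|$ produce an inverse-$|w|$ singularity that must be absorbed by the $L^1\cap L^\infty$ integrability of $g$ without exceeding the $(1+|v|^2)f$ budget on the right-hand side.
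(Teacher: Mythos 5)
Your proposal is a genuine alternative to the paper's route, and it is worth contrasting the two.  The paper introduces an intermediate cutoff $\eta=1/\log R$ and splits the $v_\star$-integration into $B_\eta(v)$ and its complement; on the complement Lemma~\ref{lem:angdev} is invoked with that \emph{fixed} $\eta$, and on $B_\eta(v)$ the smallness comes from the measure of the ball.  You instead apply the angle estimate pointwise with $\eta=|w|$ on all of $v_\star$-space, and defer the $|w|$-dependence to the $v_\star$-integration, absorbing inverse powers of $|w|$ via $g\in L^1\cap L^\infty$ (which does follow from \eqref{eq:bckgrnd}) and positive powers of $|w|$ via the second moment of $g$.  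This leaner decomposition, splitting only in the $r$-variable (bulk/boundary layer/tail), is structurally simpler and the tail estimate $R^{2-s}$ and the $C_2$ bound are both correct as you state them.

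There are, however, two concrete gaps in the bulk estimate that you should close.  First, the claim that $\int_0^{R-1-1/|w|}r\,\kappa(r,R)\,\dd r$ is ``$o(1)$ as $R\to\infty$ independently of $|w|$'' is not correct: the second term of $\kappa$ in \eqref{eq:kappa} has an inverse-power singularity at $r=R-1$ of order $(1-r^2/(R-1)^2)^{-3/2}$, and the upper limit $R-1-1/|w|$ approaches the singularity as $|w|\to\infty$, giving an integral that grows like $|w|^{1/2}$ (roughly $R^{3/2-s}|w|^{1/2}$, consistent with the $R^{-(s-3/2)}\log^{-7/2}R$ term in the paper's explicit $C_1(R)$).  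The Lemma's $o(1)$ statement holds only for \emph{fixed} $\eta$, so with $\eta=|w|$ you need to track this $|w|^{1/2}$ and integrate it against $(1+|v_\star|^2)g_\star$, which works given \eqref{eq:bckgrnd} but must be done explicitly.  Second, the hypotheses of Lemma~\ref{lem:angdev} require $R-1/\eta>1+\rho_2$; with $\eta=|w|$ this fails whenever $|w|\lesssim 1/R$, so neither branch of \eqref{eq:angerr} is available for those $v_\star$.  For that regime you need a separate, crude bound, e.g.\ $|h(v')-h(v^{\prime,R})|\le 2\norm{\nabla_v h}_{L^\infty}|w|$ together with $\int_0^R r\dd r\le R^2/2$ and the $O(R^{-3})$ measure of the set $\{|v_\star-v|<1/R\}$; this amounts to reinstating a weak form of the paper's $B_\eta(v)$ split in the small-$|w|$ region rather than eliminating it entirely.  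With those two points made precise the argument should close, yielding $C_1(R)=O(R^{3/2-s})+O(R^{2-s}\log R)+O(R^{1-s})+O(R^{2-s})$, all $o(1)$ for $s>2$.
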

\begin{proof}
Set $\eta=\frac{1}{\log R}$ and then by using the triangle inequality, the Lipschitz nature of $h$, and by splitting the integration over $v_\star$ into $B_\eta(v)$ and $\R^3\setminus B_\eta(v)$, we can estimate the difference by
\begin{equation*}
\begin{aligned}
 |\langle L^R(f),h\rangle -&\langle L(f),h\rangle|\\ &\leq C\norm{\nabla_v h}_{L^\infty} \Bigg(\int_\cU\int_{\R^3\setminus B_\eta(v)}\int_{B_R}|v^{\prime,R}-v'|\,g_\star\,f\,|v_\star-v|\dd S\dd v_\star \dd x \dd v\\&\hspace{1cm}+\int_\cU\int_{B_\eta(v)}\int_{B_R}|v^{\prime,R}-v'|\,g_\star\,f\,|v_\star-v|\dd S\dd v_\star \dd x \dd v\\&\hspace{2cm}+\int_\cU\int_{\R^3\setminus B_\eta(v)}\int_{\cS\setminus B_R}|v'-v|\,g_\star\,f\,|v_\star-v|\dd S\dd v_\star \dd x \dd v\\&\hspace{3cm}+\int_\cU\int_{B_\eta(v)}\int_{\cS\setminus B_R}|v'-v|\,g_\star\,f\,|v_\star-v|\dd S\dd v_\star \dd x \dd v\Bigg).
 \end{aligned}
\end{equation*}
On the terms with  $v_\star \nin B_\eta (v)$ we use the estimate on the difference of scattering angles in Lemma~\ref{lem:angdev} and the estimate, for $r>R$ that 
\begin{equation*}
 |v'-v|\leq C\theta^R\,|v_\star-v|\leq  \frac{C}{1+\eta^2r^s} |v_\star-v|
\end{equation*}
to estimate the integrals above outside of $B_\eta(v)$  by
\begin{multline*}
C\left( \int_0^{R-1-1/\eta}\frac{r\,\kappa(r,R)\dd r}{\eta^2}+ \int_{R-1-1/\eta}^\infty\frac{C\,r\dd r}{1+\eta^2r^s}\right)\\\times\int_{\R^3}(1+|v_\star|^2)\,g_\star\dd v_\star\int_\cU(1+|v|^2)\,f(v) \dd x \dd v
\end{multline*}
where we observe that the estimates on the scattering angles reduce the triple integration into a product of three integrals. 

On $B_\eta(v)$, using the method of proof in Lemma~\ref{lem:angdev} we obtain with $w=v_\star-v$ the inequality
\begin{equation*}
 |\theta(r,w)-\theta^{R}(r,w)|\leq \begin{cases}\frac{C}{1+ |w|^2\, r^{s}}& \text{for } r>R-1-1/|w|\\
  &\\\frac{C\,\kappa(r,R)}{|w|^2} &\text{for } r<R-1-1/|w|,  \end{cases}
\end{equation*}
and using this to estimate the differences $|v^{\prime,R}-v'|$ and $|v'-v|$ on the set $B_\eta(v)$, and then using the form of $\kappa$ in \eqref{eq:kappa}, one obtains the form of $C_1(R)$ as
\begin{multline*}
 C_1(R):=C\Bigg( \int_0^{R-1-1/\eta}\frac{r\,\kappa(r,R)\dd r}{\eta^2}+ \int_{R-1-1/\eta}^\infty\frac{r\dd r}{1+\eta^2r^s}\\+\frac{1}{R^{s-3/2}\,\log^{7/2}R }+\int_{0}^{R-1}\frac{r\dd r}{\left(1-\frac{r^2}{R^2}\right)R^s\log^3R}\\+\frac{1}{\log^3R}\int_{0}^\infty\frac{r\dd r}{1 +r^s}\Bigg)\int_{\R^3}(1+|v_\star|^2)g_\star\dd v_\star
\end{multline*}
from which one can easily see that this is $o(1)$ as $R\to \infty$. For the operator $L^R$, by splitting the integration over $v_\star$ into the regions $B_1$ and $B_1^C$, and  using 
\begin{equation*}
 \theta^R(r,|w|) \leq \frac{C}{1+|w|^2 r^s},
\end{equation*}
we obtain the stated estimate on $L^R$, with
\begin{equation*}
 C_2= C\int_{\R^3}(1+|v_\star|^2)g_\star\dd v_\star\int_0^\infty \frac{r}{1+r^s}\dd r
\end{equation*}
for some $C$ depending only on the potential $\phi$.
\end{proof}

We use the estimate on the operator $L^R$ to extract a convergent subsequence of $\{f^R\}_{R\in (1,\infty)}$, and then show this limit satisfies \eqref{eq:weaksol} using the estimate on $L^R-L$.

\begin{lemma}\label{lem:BEcomp}
Suppose that $\phi$ is an admissible long range potential with decay as in equation~\eqref{eq:potdecay}. Then the sequence $\{f^R\}_{R\in (1,\infty)}$ has a convergent subsequence, and the limit is a weak solution of the linear Boltzmann equation associated to the potential $\phi$.
\end{lemma}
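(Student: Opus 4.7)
The plan is a compactness-plus-passage-to-the-limit argument: extract a subsequential limit of $\{f^R\}$ by Banach-Alaoglu, then verify that this limit satisfies \eqref{eq:weaksol} by using the comparison estimates of Lemma~\ref{lem:collest} to dispose of the difference $L^R-L$.

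First I establish uniform bounds on the family $\{f^R\}$. Since $L^R(1)=0$ (the integrand vanishes pointwise) and the linear Boltzmann equation admits a maximum principle, $0\le f^R\le \norm{f_0}_{L^\infty}$ uniformly in $R$. Combined with the weighted $L^1$ bound $\int_\cU (1+|v|^2)\,f^R(t,x,v)\,\dd x\,\dd v\le C$ guaranteed by Proposition~\ref{prop:SRexist}, and with standard moment propagation for the linear Boltzmann equation (which, using $(1+|v|^5)g\in L^\infty$ from \eqref{eq:bckgrnd}, can be pushed to $\int (1+|v|^{2+\alpha})\,f^R\,\dd x\,\dd v\le C$ for some $\alpha>0$), these deliver both weak-$\star$ $L^\infty$ compactness and tightness of the weighted densities. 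Banach-Alaoglu yields a subsequence $R_n\to\infty$ with $f^{R_n}\rightharpoonup f$ weak-$\star$ in $L^\infty([0,T]\times\cU)$, and Dunford-Pettis allows us to arrange in addition that $(1+|v|^2)f^{R_n}\rightharpoonup (1+|v|^2)f$ weakly in $L^1$, the identification of the limit coming from testing against $\varphi\in C_c^\infty(\cU)$ with compact $v$-support. Lower semicontinuity then gives $\int_\cU(1+|v|^2)f\,\dd x\,\dd v\le C$.

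Next I pass to the limit in \eqref{eq:srweak} for $f^{R_n}$ against an arbitrary test function $h\in C_0^\infty([0,T)\times\cU)$. The transport and initial-data terms converge directly by weak-$\star$ convergence, since $\pd_t h+v\cdot\nabla_x h$ is continuous with compact support and hence in $L^1$. For the collision term write $\langle L^R(f^R),h\rangle=\int_\cU f^R\,\Lambda^R h\,\dd x\,\dd v$ where the adjoint operator is
\begin{equation*}
 \Lambda^R h(v):=\int_{\R^3}\int_{B_R}(h(v^{\prime,R})-h(v))\,g_\star\,|v_\star-v|\,\dd S\,\dd v_\star,
\end{equation*}
and $\Lambda h$ is defined analogously with $B_R$ replaced by $\cS$. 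Decompose
\begin{equation*}
 \langle L^{R_n}(f^{R_n}),h\rangle-\langle L(f),h\rangle=\int_\cU f^{R_n}(\Lambda^{R_n}h-\Lambda h)\,\dd x\,\dd v + \int_\cU (f^{R_n}-f)\,\Lambda h\,\dd x\,\dd v.
\end{equation*}
The first piece vanishes by Lemma~\ref{lem:collest}, whose proof actually yields the pointwise bound $|\Lambda^{R_n}h-\Lambda h|(v)\le C_1(R_n)(1+|v|^2)\norm{\nabla_v h}_{L^\infty}$; pairing with the uniform energy bound on $f^{R_n}$ gives $O(C_1(R_n))=o(1)$. The second piece vanishes because $\Lambda h(v)/(1+|v|^2)$ is bounded (using the $C_2$-estimate in Lemma~\ref{lem:collest} extended to $R=\infty$) and $(1+|v|^2)f^{R_n}\rightharpoonup (1+|v|^2)f$ weakly in $L^1$. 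Integrating in $t$ via Lebesgue dominated convergence (with dominating function $C\norm{\nabla h}_{L^\infty}\sup_n\int(1+|v|^2)f^{R_n}\,\dd x\,\dd v$) completes the limit, and $f$ satisfies \eqref{eq:weaksol}.

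The principal obstacle is the convergence $\int(f^{R_n}-f)\,\Lambda h\,\dd x\,\dd v\to 0$: because $\Lambda h$ grows like $(1+|v|^2)$ at infinity, mere weak-$\star$ $L^\infty$ convergence of $f^{R_n}$ is insufficient, and one needs the tightness of the weighted family $\{(1+|v|^2)f^{R_n}\}$. This is exactly what forces the proof to appeal to the higher-moment assumption $(1+|v|^5)g\in L^\infty$ from \eqref{eq:bckgrnd}, which drives the propagation of a moment of strictly higher order than $|v|^2$ via a Gronwall argument on the mild solution representation \eqref{eq:mildsol}. A secondary routine check is the identification $(1+|v|^2)f^{R_n}\rightharpoonup(1+|v|^2)f$, which follows from consistency of the two weak limits on $C_c^\infty$-functions with compact $v$-support.
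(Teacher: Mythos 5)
Your proof follows the paper's broad route — extract a weak-$\star$ $L^\infty$ limit via Banach--Alaoglu, then use Lemma~\ref{lem:collest} to pass to the limit in the collision term — but with a slightly different decomposition and, in one place, a genuinely more careful treatment. The paper splits $L^R(f^R)-L(f)=L^R(f^R-f)+\bigl(L^R-L\bigr)(f)$, whereas you split (in adjoint form) as $\bigl(L^R-L\bigr)(f^R)+L(f^R-f)$. Both are workable, and the $\bigl(L^R-L\bigr)$ piece is handled identically via the $C_1(R)=o(1)$ estimate. For the remaining piece, the paper writes
\begin{equation*}
\Bigl|\int_0^T\langle L^R(f^R-f),h\rangle\,\dd t\Bigr|\leq C_2\,\norm{\nabla_v h}_{L^\infty}\int_0^T\int_\cU(1+|v|^2)(f^R-f)\,\dd x\,\dd v\,\dd t
\end{equation*}
and then invokes the (claimed) convergence $\int(1+|v|^2)f^R\to\int(1+|v|^2)f$. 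This is loose: the operator bound from Lemma~\ref{lem:collest} involves $\int(1+|v|^2)|f^R-f|$, and convergence of the unsigned integrals does not give this. Your fix — pair $f^R-f$ against the bounded function $\Lambda h(v)/(1+|v|^2)$ and use weak $L^1$ convergence of $(1+|v|^2)f^R$, justified by Dunford--Pettis and a propagated higher moment — is the correct way to close this step, and you are right to flag tightness in $v$ as the real obstacle (weak-$\star$ $L^\infty$ convergence alone is insufficient because $(1+|v|^2)\notin L^1(\cU)$). So your argument both matches and sharpens the paper's.

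One caveat on your opening step: the claim that $L^R(1)=0$ ``since the integrand vanishes pointwise'' is not correct for the Rayleigh-gas operator as written in \eqref{eq:strongcollop}. At $f\equiv 1$ the integrand is $g'_\star-g_\star$, which does not vanish pointwise; what vanishes is the \emph{adjoint}, $\Lambda^R 1=\int\int(1-1)\,g_\star|v_\star-v|=0$. That gives a maximum principle on the dual $L^\infty$ problem (hence $L^1$-contractivity of the forward semigroup), but does not immediately deliver $\norm{f^R(t)}_{L^\infty}\le\norm{f_0}_{L^\infty}$. The paper's own justification (``the estimate on $L^R$ in Lemma~\ref{lem:collest} shows $\{f^R\}$ is bounded in $L^\infty$'') is no more explicit, so this is a shared, somewhat orthogonal gap; but you should not assert $L^R(1)=0$ by pointwise cancellation, as that reasoning confuses $L^R$ with its adjoint.
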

 \begin{proof}
 Firstly observe that the estimate on $L^R$ in Lemma~\ref{lem:collest} shows that the set $\{f^R\}$ is bounded uniformly in $R$ in $L^\infty$. By Banach Alaoglu this sequence converges weak-$\star$ in $L^\infty$, up to a subsequence, to a function $f\in L^\infty$. We now show that the limit $f$ is a weak solution, namely that it satisfies equation~\eqref{eq:weaksol}.

The uniform bound on the operators $L^R$ in Lemma~\ref{lem:collest} further ensures that
\begin{equation*}
 \int (1+|v|^2)\,f^R(t,x,v)\dd x \dd v< C<\infty
\end{equation*}
and we can thus pass to the limit in the term on the left hand side to obtain that 
\begin{equation}\label{eq:limitnorm}
 \int (1+|v|^2 )f=\int (1+|v|^2)\lim_{R\to\infty} f^R=\lim_{R\to\infty}\int (1+|v|^2)\,f^R<\infty.
\end{equation}

We now need to show that $f$ satisfies equation \eqref{eq:weaksol} for any suitable test function $h$, namely
\begin{equation*}
    -\int_0^T\int_\cU  \left(\pd_t h +v\cdot \nabla_x h\right)f\dd x \dd v \dd t-\int_{\cU} f_0\, h(0) \dd x \dd v= \int_0^T\langle L(f) ,h\rangle \dd t.
   \end{equation*}
Since $f^R$ is a weak solution of the linear Boltzmann equation for $\phi^R$, we can pass to the limit in the left hand side of equation~\eqref{eq:srweak} to obtain
   \begin{equation*}
    \int_0^T\int_\cU  \left(\pd_t h +v\cdot \nabla_x h\right)\, f^R\dd x \dd v \dd t \to \int_0^T\int_\cU  \left(\pd_t h +v\cdot \nabla_x h\right)\,f\dd x \dd v \dd t.
   \end{equation*}
We then observe that
\begin{equation*}
 \begin{aligned}
  \int_0^T \langle  L^R( f^R),h\rangle-\langle L(f),h\rangle \dd t &= \int_0^T \langle   L^R(  f^R-f),h\rangle \dd t+ \int_0^T \langle L^R(f)-L(f),h\rangle \dd t \\&\leq C_2\,\norm{\nabla_v h}_{L^\infty}\,\int_0^T\int_\cU(1+|v|^2)\,(f^R-f)\dd x \dd v \dd t\\&\hspace{1.5cm}+C_1(R)\,\norm{\nabla_v h}_{L^\infty}\,\int_0^T\int_\cU(1+|v|^2)\,f\dd x \dd v \dd t 
 \end{aligned}
\end{equation*}
where the estimates are from Lemma~\ref{lem:collest}. Since $C_2$ is bounded in $R$, equation~\eqref{eq:limitnorm} ensures the first term tends to $0$, and since $C_1=o(1)$ this ensures that the second term tends to $0$ as $R\to \infty$, thus showing $f$ is indeed a weak solution of equation \eqref{eq:weaksol}. 
\end{proof}

\begin{proof}{\bf (of Theorem~\ref{thm:main})}
We aim to conclude that $f^\eps \stackrel{\star}{\rightharpoonup} f$ in $L^\infty$ as $\eps \to 0$. We introduce $R= \eps^{-1/(3+\gamma)}$ and define the probability densities $f^{\eps,R}$ and $P^R$ on $\cMT$ as in Section~\ref{sec:MT}. We then write, for $h \in C^\infty_0(\cU) $ a test function,
 \begin{multline*}
  \int_\cU (f^\eps-f )\,h\dd x \dd v\leq \left|\int_\cU f^\eps(x,v)\,h\dd x \dd v  -\int_{\cMT}h(\Phi)\,f_t^{\eps,R} (\Phi)\dd \Phi \right| \\+\left|\int_\cMT h(\Phi)\, f_t^{\eps,R}(\Phi)\dd \Phi-\int_\cMT h(\Phi)\, P^R_t(\Phi)\dd \Phi\right|\\+ \left|\int_\cU (f^R-f)\,h\dd x \dd v\right|,
 \end{multline*}
 and we analyse each of these terms in the limit $\eps \to 0$.
 
 Firstly, Lemma~\ref{lem:wkconvpart} ensures that, with $R=\eps^{-\frac{1}{3+\gamma}}$, the first term converges to $0$ as $\eps\to 0$.
 For the second term, by following the proof of Lemma~\ref{srdensity}, one can see that by choosing $R\leq \eps^{-1/(3+\gamma)}$, the terms in $\rho_t^\eps(\Phi)$ still tend to $0$. We then conclude the convergence in total variation of $f^{\eps,R}$ to $P^R$ in the limit $\eps \to 0$.
 
 Finally, since $R\to \infty$ and the potential $\phi$ satisfies the decay \eqref{eq:potdecay}, which in particular implies that $\psi(\rho) \leq \rho^{-s}$ for $s>2$, we can apply Lemma~\ref{lem:BEcomp} to conclude that this term also tends to $0$ as $\eps \to 0$.
 This thus concludes the proof of the theorem. 
\end{proof}

\addcontentsline{toc}{section}{References}
\small{\bibliographystyle{plain2}
\bibliography{biblio-paper}}

\begin{thebibliography}{10}

\bibitem{alexandre2002}
R.~Alexandre and C.~Villani.
\newblock On the {B}oltzmann equation for long-range interactions.
\newblock {\em Communications on pure and applied mathematics}, {\bf {\bf
  55}}(1):30--70, 2002.

\bibitem{allaire2015}
G.~Allaire, X.~Blanc, B.~Despres, and F.~Golse.
\newblock Transport et diffusion.
\newblock
  \url{http://www.cmap.polytechnique.fr/~allaire/map567/M1TranspDiff.pdf},
  2015.

\bibitem{arendt2011}
W.~Arendt, C.J.K. Batty, M.~Hieber, and F.~Neubrander.
\newblock {\em Vector-valued Laplace transforms and {C}auchy problems},
  volume~{\bf 96}.
\newblock Springer Science \& Business Media, 2011.

\bibitem{arlotti2007}
L.~Arlotti and B.~Lods.
\newblock Integral representation of the linear {B}oltzmann operator for
  granular gas dynamics with applications.
\newblock {\em Journal of Statistical Physics}, {\bf {\bf 129}}(3):517--536,
  2007.

\bibitem{Ayi2017}
N.~Ayi.
\newblock From {N}ewton's {L}aw to the {L}inear {B}oltzmann {E}quation
  {W}ithout {C}ut-{O}ff.
\newblock {\em Communications in Mathematical Physics}, {\bf {\bf
  350}}(3):1219--1274, 2017.

\bibitem{ball1977}
J.M. Ball.
\newblock Strongly continuous semigroups, weak solutions, and the variation of
  constants formula.
\newblock {\em Proceedings of the American Mathematical Society}, {\bf {\bf
  63}}(2):370--373, 1977.

\bibitem{banasiak2006}
J.~Banasiak and L.~Arlotti.
\newblock {\em Perturbations of positive semigroups with applications}.
\newblock Springer Science \& Business Media, 2006.

\bibitem{bodineau2015}
T.~Bodineau, I.~Gallagher, and L.~Saint-Raymond.
\newblock The {B}rownian motion as the limit of a deterministic system of
  hard-spheres.
\newblock {\em Inventiones mathematicae}, pages 1--61, 2015.

\bibitem{boldrighini1983}
C.~Boldrighini, L.~A. Bunimovich, and Y.~G. Sinai.
\newblock On the {B}oltzmann equation for the lorentz gas.
\newblock {\em Journal of Statistical Physics}, {\bf {\bf 32}}(3):477--501, Sep
  1983.

\bibitem{boltzmann1970}
L.~Boltzmann.
\newblock Weitere studien {\"u}ber das w{\"a}rmegleichgewicht unter
  gasmolek{\"u}len.
\newblock {\em Sitzungsberichte Akademie der Wissenschaften}, {\bf
  66}:275--370, 1872.

\bibitem{crippa2017}
G.~Crippa, S.~Ligabue, and C.~Saffirio.
\newblock Lagrangian solutions to the {V}lasov-{P}oisson system with a point
  charge.
\newblock {\em arXiv preprint arXiv:1705.08077}, 2017.

\bibitem{dautray1993}
R.~Dautray and J.L. Lions.
\newblock {\em Mathematical Analysis and Numerical Methods for Science and
  Technology, Vol. 6: Evolution Equations II}.
\newblock Springer-Verlag, 1993.

\bibitem{desvillettes1999}
L.~Desvillettes and M.~Pulvirenti.
\newblock The linear {B}oltzmann equation for long-range forces: a derivation
  from particle systems.
\newblock {\em Mathematical Models and Methods in Applied Sciences}, {\bf {\bf
  9}}(08):1123--1145, 1999.

\bibitem{gallagher2013}
I.~Gallagher, L.~Saint-Raymond, and B.~Texier.
\newblock {\em From {N}ewton to {B}oltzmann: hard spheres and short-range
  potentials}.
\newblock {E}uropean {M}athematical {S}ociety, 2013.

\bibitem{Gallavotti72}
G.~Gallavotti.
\newblock Rigorous theory of the {B}oltzmann equation in the lorentz gas, 1972.

\bibitem{Golse2008}
F.~Golse.
\newblock On the periodic {L}orentz gas and the {L}orentz kinetic equation.
\newblock {\em Annales de la faculté des sciences de Toulouse Mathématiques},
  {\bf {\bf 17}}(4):735--749, 6 2008.

\bibitem{grad1958}
H.~Grad.
\newblock Principles of the kinetic theory of gases.
\newblock In {\em Thermodynamik der Gase/ Thermodynamics of Gases}, pages
  205--294. Springer, 1958.

\bibitem{gressman2010}
P.T. Gressman and R.~M. Strain.
\newblock Global classical solutions of the {B}oltzmann equation with
  long-range interactions.
\newblock {\em Proceedings of the National Academy of Sciences}, {\bf {\bf
  107}}(13):5744--5749, 2010.

\bibitem{king1975}
F.~G. King.
\newblock {\em BBGKY hierarchy for positive potentials}.
\newblock PhD thesis, 1975.

\bibitem{lanford1975}
O.~E. Lanford.
\newblock Time evolution of large classical systems.
\newblock In {\em {D}ynamical systems, theory and applications}, pages 1--111.
  {S}pringer, 1975.

\bibitem{lebowitz1982}
J.~L. Lebowitz and H.~Spohn.
\newblock Steady state self-diffusion at low density.
\newblock {\em Journal of Statistical Physics}, {\bf {\bf 29}}(1):39--55, Sep
  1982.

\bibitem{lorentz1905}
H.A. Lorentz.
\newblock The motion of electrons in metallic bodies.
\newblock In {\em KNAW, proceedings}, volume~{\bf 7}, pages 438--453, 1905.

\bibitem{marklof2010}
J.~Marklof.
\newblock Kinetic transport in crystals.
\newblock In {\em XVIth International Congress on Mathematical Physics}, pages
  162--79. World Scientific Publishing Hackensack, NJ, 2010.

\bibitem{matthies2017}
K.~{Matthies} and G.~{Stone}.
\newblock {D}erivation of a {N}onautonomous linear {B}oltzmann {E}quation from
  a heterogeneous {R}ayleigh {G}as.
\newblock {\em ArXiv preprint arXiv:1706.03532}, June 2017.

\bibitem{Matthies2018}
K.~Matthies, G.~Stone, and F.~Theil.
\newblock The derivation of the linear {B}oltzmann equation from a {R}ayleigh
  gas particle model.
\newblock {\em Kinetic and Related Models}, {\bf {\bf 11}}(1):137--177, 2018.

\bibitem{matthies2012}
K.~Matthies and F.~Theil.
\newblock A {S}emigroup approach to the {J}ustification of {K}inetic {T}heory.
\newblock {\em SIAM Journal on Mathematical Analysis}, {\bf {\bf
  44}}(6):4345--4379, 2012.

\bibitem{maxwell1866}
J.~C. Maxwell.
\newblock On the dynamical theory of gases.
\newblock {\em Proceedings of the Royal Society of London}, {\bf 15}:167--171,
  1866.

\bibitem{pfaffelmoser1992}
K.~Pfaffelmoser.
\newblock Global classical solutions of the {V}lasov-{P}oisson system in three
  dimensions for general initial data.
\newblock {\em Journal of Differential Equations}, {\bf {\bf 95}}(2):281--303,
  1992.

\bibitem{pulvirenti2014}
M.~Pulvirenti, C.~Saffirio, and S.~Simonella.
\newblock On the validity of the {B}oltzmann equation for short range
  potentials.
\newblock {\em Reviews in Mathematical Physics}, {\bf {\bf 26}}(02):1450001,
  2014.

\bibitem{Spohn1978}
H.~Spohn.
\newblock The {L}orentz process converges to a random flight process.
\newblock {\em Communications in Mathematical Physics}, {\bf {\bf
  60}}(3):277--290, Oct 1978.

\bibitem{truesdell1980}
C.~Truesdell and R.G. Muncaster.
\newblock Fundamentals of {M}axwell's kinetic theory of a simple monatomic gas.
\newblock {\em Academic, New York}, 1980.

\bibitem{vanBeijeren1980}
H.~van Beijeren, O.~E. Lanford, J.~L. Lebowitz, and H.~Spohn.
\newblock Equilibrium time correlation functions in the low-density limit.
\newblock {\em Journal of Statistical Physics}, {\bf {\bf 22}}(2):237--257, Feb
  1980.

\bibitem{villani1999}
C.~Villani.
\newblock Regularity estimates via the entropy dissipation for the spatially
  homogeneous {B}oltzmann equation without cut-off.
\newblock {\em Revista Matem{\'a}tica Iberoamericana}, {\bf {\bf
  15}}(2):335--352, 1999.

\end{thebibliography}
\end{document}